\definecolor{webgreen}{rgb}{0,.5,0}
\definecolor{webbrown}{rgb}{.6,0,0}
\newcommand{\seqnum}[1]{\href{http://oeis.org/#1}{\underline{#1}}}
\newcommand{\shadetheboxes}[1]{
    \foreach \x/\y in {#1}
    \fill[pattern color = black!75, pattern=north east lines] (\x,\y) rectangle +(1,1);
}
\newcommand{\drawthegrid}[1]{
    \draw (0.01,0.01) grid (#1+0.99,#1+0.99);
}
\newcommand{\drawverticallines}[3]{
    \foreach \x in {#2}
    \draw[line width=#3] (\x+0.01,0.01) -- (\x+0.01,#1+0.99);
}
\newcommand{\drawhorizontallines}[3]{
    \foreach \y in {#2}
    \draw[line width=#3] (0.01,\y+0.01) -- (#1+0.99,\y+0.01);
}
\newcommand{\drawtheclpattern}[1]{
    \foreach \x/\y in {#1}
    \filldraw (\x,\y) circle (6pt);
}
\newcommand{\drawclpattern}[2]{
	\foreach[count=\x] \y in {#1}
	{
		\filldraw (\x,\y) circle (#2 pt);
	}
}
\newcommand{\drawspecialbox}[1]{
    \foreach \x/\y/\z/\w/\A in {#1}
    {
        \fill[color = white!100, opacity=1, rounded corners = 1.5pt] (\x+0.125,\y+0.125) rectangle (\z-0.125,\w-0.125);
        \draw[color = black, rounded corners = 1.5pt] (\x+0.125,\y+0.125) rectangle (\z-0.125,\w-0.125);
        \fill[black] (\x/2+\z/2,\y/2+\w/2) node {\A};
    }
}
\newcommand{\drawspecialboxlarge}[1]{
    \foreach \x/\y/\z/\w/\A in {#1}
    {
        \fill[color = white!100, opacity=1, rounded corners = 1.5pt] (\x+0.125,\y+0.125) rectangle (\z-0.125,\w-0.125);
        \draw[color = black, rounded corners = 1.5pt] (\x+0.125,\y+0.125) rectangle (\z-0.125,\w-0.125);
        \fill[black] (\x/2+\z/2,\y/2+\w/2) node {\Large \A};
    }
}
\newcommand{\drawsolidshadedbox}[1]{
    \foreach \x/\y/\z/\w/\A in {#1}
    {
        \fill[color = gray!50, opacity=1, rounded corners=1.5pt] (\x+0.125,\y+0.125) rectangle (\z-0.125,\w-0.125);
        \draw[color = black, rounded corners=1.5pt] (\x+0.125,\y+0.125) rectangle (\z-0.125,\w-0.125);
        \fill[black] (\x/2+\z/2,\y/2+\w/2) node {\A};
    }
}
\newcommand{\drawlabels}[1]{
	\foreach \x/\y/\lab in {#1}
	{
		\draw (\x + 0.5,\y + 0.5) node {\lab};
	}
}
\newcommand{\etcdots}[2]{
	\scalebox{#1}
	{
		\begin{tikzpicture}[baseline=(current bounding box.center)]
			\filldraw (0,2) circle (#2 pt);
			\filldraw (1,1) circle (#2 pt);
			\filldraw (2,0) circle (#2 pt);
		\end{tikzpicture}
	}
}
\newcommand{\etcdotsflipped}[2]{
    \scalebox{#1}
    {
        \begin{tikzpicture}[baseline=(current bounding box.center)]
            \filldraw (0,0) circle (#2 pt);
            \filldraw (1,1) circle (#2 pt);
            \filldraw (2,2) circle (#2 pt);
        \end{tikzpicture}
    }
}
\newcommand{\decr}{\etcdots{0.2}{6}}
\newcommand{\incr}{\etcdotsflipped{0.2}{6}}
\newcommandx{\patt}[9][4={},5={},6={},7={},8={},9=4]
{
	\scalebox{#1}
	{
		\begin{tikzpicture}[baseline=(current bounding box.center)]
			\useasboundingbox (0.0,-.3) rectangle (#2+1,#2+1.3);
			\shadetheboxes{#4}
			\draw (0.01,0.01) grid (#2+1-0.01,#2+1-0.01);

			\drawsolidshadedbox{#6}
			\drawspecialbox{#7}
			\drawspecialboxlarge{#5}
			\drawclpattern{#3}{#9}
			\drawlabels{#8}
		\end{tikzpicture}
	}
}
\newcommandx{\cpatt}[8][4={},5={},6={},7={},8={}]
{
	\scalebox{#1}
	{
		\begin{tikzpicture}[baseline=(current bounding box.center)]
			\useasboundingbox (0.0,-.3) rectangle (#2+1,#2+1.3);
			\shadetheboxes{#4}
			\draw (0.01,0.01) grid (#2+1-0.01,#2+1-0.01);

			\drawsolidshadedbox{#6}
			\drawspecialbox{#7}
			\drawspecialboxlarge{#5}
			\drawclpattern{#3}{4}
			
			\foreach \x/\y in {#8}
			{
				\draw[line width=1] (\x,\y) circle (7 pt);
			}
		\end{tikzpicture}
	}
}
\newcommandx{\metapatt}[8][6={},7={},8={}]
{
    \scalebox{#1}
    {
        \begin{tikzpicture}[baseline=(current bounding box.center)]
					\foreach \width/\height in {#2}
					{
						\useasboundingbox (0.0,-.3) rectangle (\width+1,\height+1.3);
            \shadetheboxes{#6}

            \foreach \pos/\type in {#4}
            {
                \ifthenelse{\equal{\type}{v}}
                {
                    \drawverticallines{\height}{\pos}{1.7pt}
                }
                {
								    \ifthenelse{\equal{\type}{d}}
                    {
                      \draw[densely dashed] (\pos,0) -- (\pos,\height+1);
                    }
										{
											\drawhorizontallines{\width}{\pos}{1.7pt}
										}
                }
            }

            \foreach \pos/\type in {#3}
            {
                \ifthenelse{\equal{\type}{v}}
                {
                    \drawverticallines{\height}{\pos}{0.6pt}
                }
                {
										\drawhorizontallines{\width}{\pos}{0.6pt}
                }
            }

            \drawsolidshadedbox{#8}
            \drawspecialbox{#7}

            \foreach \x/\y/\type in {#5}
            {
                \ifthenelse{\equal{\type}{a}}
                {
                    \draw (\x,\y) circle (6pt);
                    \filldraw (\x,\y) circle (3pt);
                }
                {
                    \filldraw (\x,\y) circle (4pt);
                }
            }
					}
        \end{tikzpicture}
    }
}
\newcommandx{\dpatt}[9][6={},7={},8={},9={}]
{
    \scalebox{#1}
    {
        \begin{tikzpicture}[baseline=(current bounding box.center)]
					\foreach \width/\height in {#2}
					{
						\useasboundingbox (0.0,-.3) rectangle (\width+1,\height+1.3);
            \shadetheboxes{#6}

            \foreach \pos/\type in {#4}
            {
                \ifthenelse{\equal{\type}{v}}
                {
                    \drawverticallines{\height}{\pos}{1.7pt}
                }
                {
								    \ifthenelse{\equal{\type}{d}}
                    {
                      \draw[densely dashed] (\pos,0) -- (\pos,\height+1);
                    }
										{
											\drawhorizontallines{\width}{\pos}{1.7pt}
										}
                }
            }

            \foreach \pos/\type in {#3}
            {
                \ifthenelse{\equal{\type}{v}}
                {
                    \drawverticallines{\height}{\pos}{0.6pt}
                }
                {
										\drawhorizontallines{\width}{\pos}{0.6pt}
                }
            }

            \drawsolidshadedbox{#8}
            \drawspecialbox{#7}

            \foreach \x/\y/\type in {#5}
            {
                \ifthenelse{\equal{\type}{a}}
                {
                    \draw9 (\x,\y) circle (6pt);
                    \filldraw (\x,\y) circle (3pt);
                }
                {
                    \filldraw (\x,\y) circle (4pt);
                }
            }
						
						\drawlabels{#9}
					}
        \end{tikzpicture}
    }
}
\newcommand{\mpattern}[4]{										
  \raisebox{0.6ex}{
  \begin{tikzpicture}[scale=0.35, baseline=(current bounding box.center), #1]
  	\useasboundingbox (0.0,-0.1) rectangle (#2+1.4,#2+1.1);
	
    \shadetheboxes{#4}
    
    \drawthegrid{#2}
    
    \drawtheclpattern{#3}
    
  \end{tikzpicture}}
}
\pgfmathsetmacro{\patttablescale}{1.05}
\pgfmathsetmacro{\pattdispscale}{0.80}
\pgfmathsetmacro{\patttextscale}{0.6}
\newcommand{\sym}{\mathcal{S}}
\newcommand{\mcA}{\mathcal{A}}
\newcommand{\mcB}{\mathcal{B}}
\newcommand{\Av}{\mathrm{Av}}
\renewcommand{\P}{\mathcal{P}}
\newcommand{\Q}{\mathcal{R}}
\newcommand{\PQ}{\P\times\Q}
\def\addtolatticepath#1{%
  \expandafter\def\expandafter\latticepath\expandafter{\latticepath#1}%
}
\def\parselatticepath#1{%
  \def\latticepath{node {}}%
  \Parselatticepath#1@}
\def\Parselatticepath#1{%
  \ifx#1@%
    \let\next=\relax%
  \else%
    \csname latticepathletter#1\endcsname%
    \addtolatticepath{ node {} }%
    \let\next=\Parselatticepath
  \fi%
  \next}
\tikzset{%
  insert lattice path/.style={%
    every node/.style={
      circle,
      fill,
      draw=none,
      inner sep=1.6pt
    },
    insert path={%
      \pgfextra{\parselatticepath{#1}}%
      \latticepath
    }
  }
}
\begin{document}

\theoremstyle{plain}
\newtheorem{theorem}{Theorem}
\newtheorem{corollary}[theorem]{Corollary}
\newtheorem{lemma}[theorem]{Lemma}
\newtheorem{proposition}[theorem]{Proposition}
\newtheorem*{proposition*}{Proposition}

\theoremstyle{definition}
\newtheorem{definition}[theorem]{Definition}
\newtheorem{example}[theorem]{Example}
\newtheorem{conjecture}[theorem]{Conjecture}
\newtheorem{problem}[theorem]{Problem}

\theoremstyle{remark}
\newtheorem{remark}[theorem]{Remark}

\title{Enumerations of Permutations Simultaneously Avoiding a Vincular and a Covincular Pattern of Length $3$}
\author[1]{Christian Bean}
\author[2]{Anders Claesson}
\author[1]{Henning Ulfarsson}
\affil[1]{School of Computer Science, Reykjavik University, Menntavegi~1, 101~Reykjavik, Iceland}
\affil[2]{Science Institute, University of Iceland, Dunhaga~3, 107~Reykjavik, Iceland}

\maketitle

\begin{abstract}
    Vincular and covincular patterns are generalizations of classical patterns
    allowing restrictions on the indices and values of the occurrences in a
    permutation. In this paper we study the integer sequences arising as the
    enumerations of permutations simultaneously avoiding a vincular and a
    covincular pattern, both of length $3$, with at most one restriction. We see
    familiar sequences, such as the Catalan and Motzkin numbers, but also some
    previously unknown sequences which have close links to other combinatorial
    objects such as lattice paths and integer partitions. Where possible we
    include a generating function for the enumeration. One of the cases
    considered settles a conjecture by Pudwell (2010) on the Wilf-equivalence of
    barred patterns. We also give an alternative proof of the classic result
    that permutations avoiding $123$ are counted by the Catalan numbers.
\end{abstract}

\section{Introduction}
\thispagestyle{empty}

A permutation $\pi$ contains a \emph{classical pattern} $p$, which is itself a
permutation, if $\pi$ contains a \emph{subword} which is \emph{order isomorphic}
to $p$. Babson and Steingr\'{i}msson~\cite{vincular} introduced a generalization
of classical patterns that allows the requirement that two adjacent letters in a
pattern must be adjacent in the permutation. These are called \emph{vincular
patterns}. A further extension, called \emph{bivincular patterns}, was provided
by M. Bousquet-M\'{e}lou \emph{et al.}~\cite{bivincular}. We call the special
case when only constraints on values are allowed \emph{covincular patterns}. The
set of bivincular patterns is closed under the action of the symmetry group of
the square and an alternative way of describing the covincular patterns is that
they are inverses of vincular patterns.

Simultaneous avoidance of two vincular patterns was studied by Claesson and
Mansour \cite{vincularpair} and by Kitaev \cite{multiconsecutive}. Allowing one
of the patterns to be covincular is a natural follow up question and leads to
some well-known sequences.  The overall goal of this paper is to count the
number of permutations simultaneously avoiding a length $3$ vincular and a
length $3$ covincular pattern, where both patterns force at most one
restriction. A summary of our results can be found in Table~\ref{table:summary};
these results are detailed in Sections~\ref{classicsection} through
\ref{recurrences}. One of our methods can be adapted to give a new simple proof
of the classical result that permutations avoiding the classical pattern $123$
are counted by the Catalan numbers; see Section~\ref{avoiding123}. The Appendix
contains all the results from the paper collected by their respective
enumeration.

We now present the definitions and notation we use.  An \emph{alphabet}, $X$, is
a non-empty set. An element of $X$ is a $letter$. A finite sequence of letters
from $X$ is called a \emph{word}. The word with no letters is called the
\emph{empty word} and is denoted $\epsilon$. For a word $w$ we say that the
\emph{length} of the word, denoted $|w|$, is the number of letters in $w$, that
is, if $w = x_1x_2 \cdots x_n$ then $|w| = n$. A \emph{subword} of $w$ is a
finite sequence $x_{i_1}x_{i_2} \cdots x_{i_k}$ where $1 \leq i_1 < i_2 < \cdots <
i_k \leq n$.

As we are interested in permutations the alphabet we use is $[n] =
\{1,2,\ldots,n\}$ for some $n \in \mathbb{N} = \{0,1,2,\ldots\}$. A length $n$
permutation is a length $n$ word $x = x_1x_2 \cdots x_n$ of this alphabet with
no repeated letters where $x_i = \pi(i)$. Let $\sym_n$ denote the set of all
length $n$ permutations. Let $w = w_1w_2 \cdots w_k$ and $v = v_1v_2 \cdots v_k$
be words with distinct letters. We say that $w$ is \emph{order isomorphic} to
$v$ if, for all $i$ and $j$, we have $w_i < w_j$ precisely when $v_i < v_j$. For
example $53296$ and $32154$ are order isomorphic.

\begin{definition}[\mbox{Bousquet-M\'{e}lou \emph{et al.}~\cite[page 4]{bivincular}}]

  A \emph{bivincular pattern} is a triple, $p=(\sigma,X,Y)$, where
  $\sigma \in \sym_k$ is the \emph{underlying permutation}
  and $X$ and $Y$ are subsets of $\{0,1, \ldots, k\}$. An
  \emph{occurrence} of $p$ in $\pi\in\sym_n$ is a subsequence
  $w = \pi(i_1) \cdots \pi(i_k)$ order isomorphic to $\sigma$ such that
  \[
  \forall x\in X,\, i_{x+1} = i_x+1 \quad\text{and}\quad \forall y\in
  Y,\, j_{y+1} = j_y+1,
  \]
  where $\{\pi(i_1), \ldots, \pi(i_k)\}=\{j_1, \ldots, j_k\}$ and
  $j_1 < j_2 < \cdots < j_k$. By convention, $i_0=j_0=0$ and
  $i_{k+1}=j_{k+1}=n+1$. If such an occurrence exists we say that $\pi$
  \emph{contains} $\sigma$.
\end{definition}

We define the \emph{length} of a bivincular pattern $p=(\sigma,X,Y)$, denoted
$|p|$, to be $|\sigma|$.  Further, a permutation \emph{avoids} $p$ if it does
not contain $p$. If $Y = \emptyset$ then $p$ is a \emph{vincular} pattern. If $X =
\emptyset$ then $p$ is a \emph{covincular} pattern. If $X = Y = \emptyset$ then
$p$ is a \emph{classical} pattern.  For example, the permutation $15423$
contains an occurrence of $(123,\{2\},\emptyset)$, namely the subword $123$, but
avoids $(123,\{1\},\emptyset)$. The permutation $23514$ contains an occurrence
of $(312,\emptyset,\{2\})$, namely the subword $523$, but avoids
$(312,\emptyset,\{1\})$.  The sets of all length $n$ permutations avoiding the
pattern $p$ is denoted \[ \Av_n(p) = \left\{ \pi \in \sym_n : \pi \text{ avoids }
p \right\}, \] and, for $P$ a set of patterns, $\Av_n(P) = \cap_{p \in
P}\Av_n(p)$ and $\Av(P)=\cup_{n\geq 0}\Av_n(P)$.

Below we use a pictorial representation of vincular and covincular patterns. For
a length $n$ bivincular pattern $p=(\sigma,X,Y)$: First draw the collection of
points from the underlying permutation i.e.~ $(k, \sigma_k)$ where $1 \leq k
\leq n$. Then, for each $i \in X$, shade the $i$th column and, for each $j \in
Y$, shade the $j$th row; see Figure~\ref{fig:picture}. The shading is used to
denote the empty regions in the permutation if we were to overlay the grid onto
an occurrence of the pattern.

\begin{figure}[ht]
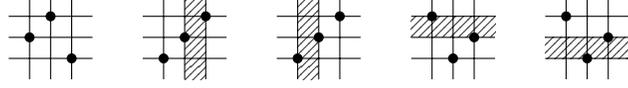

\begin{center}
\mpattern{scale=\pattdispscale}{ 3 }{ 1/2, 2/3, 3/1 }{}
\mpattern{scale=\pattdispscale}{ 3 }{ 1/1, 2/2, 3/3 }{ 2/0, 2/1, 2/2, 2/3 }
\mpattern{scale=\pattdispscale}{ 3 }{ 1/1, 2/2, 3/3 }{ 1/0, 1/1, 1/2, 1/3 }
\mpattern{scale=\pattdispscale}{ 3 }{ 1/3, 2/1, 3/2 }{ 0/2, 1/2, 2/2, 3/2}
\mpattern{scale=\pattdispscale}{ 3 }{ 1/3, 2/1, 3/2 }{ 0/1, 1/1, 2/1, 3/1}
\caption{$(231,\emptyset,\emptyset)$, $(123,\{2\},\emptyset)$, $(123,\{1\},\emptyset)$,
$(312,\emptyset,\{2\})$, and $(312,\emptyset,\{1\})$}
\label{fig:picture}
\end{center}
\end{figure}

\begin{remark}\label{subpattern}
  If $p = (\sigma,X,Y)$ and $p' = (\sigma,X',Y')$, where
  $X' \subseteq X$ and $Y' \subseteq Y$, then we immediately have that
  $\Av_n(p') \subseteq \Av_n(p)$.
\end{remark}

We are interested in $|\Av_n(p,r)|$ where $p=(\sigma,X,\emptyset)$ is a length
$3$ vincular pattern with $X\in \{\emptyset,\{1\},\{2\}\}$, and $r =
(\tau,\emptyset,Y)$ is a length $3$ covincular pattern with
$Y\in\{\emptyset,\{1\},\{2\}\}$, in a sense completing the work by Claesson and
Mansour~\cite{vincularpair}. We do not consider the following cases: 1) The case
where $X = Y = \emptyset$ since this is classical avoidance of two length $3$
patterns, which was done by Simion and Schmidt~\cite{classicsubset}; 2) The case
when either $X$ or $Y$ contains $0$ or $3$ since this forces the patterns to
occur with the first or last, smallest or largest, letters in the permutation
and would introduce too many cases to be considered in a single paper; 3) The
case when $X$ or $Y$ contain $\{1,2\}$ since these would be more naturally
treated in the context of consecutive patterns, see e.g., Elizalde and
Noy~\cite{ElizaldeNoy}.

An important property of the set of bivincular patterns, as noted by
Bousquet-M\'{e}lou et al.~\cite{bivincular}, is that it is closed under the
symmetries of the square. The set of patterns we are interested in---the union
of vincular and covincular patterns---is also closed under these symmetries, and
we make the following observation.

\begin{lemma}
  Let $\pi$ be a permutation and $p$ be a pattern, and let $\pi^*$ and $p^*$ be
  the permutation and pattern with the same symmetry applied to both $\pi$ and
  $p$. Then $\pi$ avoids $p$ if and only if $\pi^*$ avoids $p^*$.
\end{lemma}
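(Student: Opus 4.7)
The plan is to exploit the fact that the symmetries of the square form the dihedral group of order~$8$, generated by three involutions: reverse $r$, complement $c$, and inverse $\iota$. Since every symmetry is a product of these generators and each generator is its own inverse, it suffices to establish the lemma separately for $\ast\in\{r,c,\iota\}$; the general case then follows by composing the three elementary equivalences.

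The first step is to spell out how each generator acts on a bivincular pattern, extending its usual action on permutations. Writing $X^r=\{k-x:x\in X\}$ and similarly $Y^c=\{k-y:y\in Y\}$, I would take as definition
\[
(\sigma,X,Y)^r = (\sigma^r, X^r, Y),\quad
(\sigma,X,Y)^c = (\sigma^c, X, Y^c),\quad
(\sigma,X,Y)^{\iota} = (\sigma^{-1}, Y, X).
\]
These formulas are forced by the requirement that the pictorial representation of $p^*$ (points together with shaded rows and columns) be obtained from the picture of $p$ by the same geometric symmetry used on $\pi$.

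The second step is, for each generator, to exhibit an explicit bijection between occurrences of $p$ in $\pi$ and occurrences of $p^*$ in $\pi^*$. For reverse, an occurrence at positions $i_1<\cdots<i_k$ is sent to the occurrence in $\pi^r$ at positions $i'_m = n+1-i_{k+1-m}$. Order isomorphism with $\sigma^r$ is immediate. The adjacency condition $i_{x+1}=i_x+1$ for $x\in X$ translates under the substitution to $i'_{k-x+1}=i'_{k-x}+1$, which is exactly the adjacency required by $k-x\in X^r$; row adjacencies are preserved verbatim. The boundary conventions $i_0=0$ and $i_{k+1}=n+1$ are consistent because $(n+1)-(n+1)=0$ and $(n+1)-0=n+1$. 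Complement is dual, exchanging the roles of indices and values; inverse is immediate since transposing swaps rows and columns throughout.

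The main obstacle is purely bookkeeping: one has to be careful that the shift $x\mapsto k-x$ behaves correctly at the boundary indices $0$ and $k$ (and likewise for $y\mapsto k-y$), and one should check that the map on occurrences is a genuine bijection—which is clear because applying the same symmetry twice yields the identity. Once these three elementary cases are verified, composing them gives the statement for every element of the symmetry group.
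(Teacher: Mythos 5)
Your proposal is correct, and it supplies details the paper itself omits: the lemma is stated there as an unproved observation following from the closure of the class of bivincular patterns under the symmetries of the square. Your reduction to the generators (reverse, complement, inverse), the induced action on the shading sets $X$ and $Y$ including the boundary indices $0$ and $k$, and the explicit bijection on occurrences is exactly the standard verification the paper is implicitly relying on, and your bookkeeping (e.g., $X^r=\{k-x:x\in X\}$ and the swap of $X$ and $Y$ under inversion) checks out.
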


From this we immediately see that if we can find the enumeration of
$\Av(p,r)$, for a single pair of patterns $p$ and $r$, then we
automatically have the enumeration for up to $8$ other symmetric cases. This
reduces the amount of work to be done considerably. In particular, we
need only consider when $Y \neq \emptyset$ as otherwise we could take a
symmetry to a case where instead $X = \emptyset$. Let
\begin{align*}
  \P &= \Bigl\{\,(\sigma,X,\emptyset) \,:\,
       \sigma\in\sym_3,\,X\in \bigl\{\emptyset,\{1\},\{2\}\bigr\}\,\Bigr\}; \\
  \Q &= \Bigl\{\,(\sigma,\emptyset,Y) \,:\,
       \sigma\in\sym_3,\,Y\in \{\{1\},\{2\}\bigr\}\,\Bigr\}
\end{align*}
In total we have $|\PQ| = (3!\cdot 3)\cdot (3!\cdot 2) = 216$ pairs of
patterns to consider. In Table~\ref{table:summary} we summarize our
results on permutations avoiding a pair of patterns from $\PQ$.

\renewcommand{\arraystretch}{1.3}
\begin{table}[htbp]
\centering
\begin{tabular}{| c | c | c |}
\hline
Enumeration                                                      & \# pairs & OEIS \\ 
\hline
$C_n$                                                            & 24       & \seqnum{A000108} \\ 
\hline
$\binom{n}{2} + 1$                                               & 16       & \seqnum{A000124} \\ 
\hline
$2^{n-1}$                                                        & 104      & \seqnum{A000079} \\ 
\hline
$\sum\limits_{i=0}^{n} \dbinom{\binom{i+1}{2}}{n-i}$             & 8        & \seqnum{A121690} \\
\hline
$\sum\limits_{k=0}^n \dbinom{\binom{k+1}{2} + n - k - 1}{n - k}$ & 8        & \seqnum{A098569} \\
\hline
$M_n$                                                            & 16       & \seqnum{A001006} \\ 
\hline
OGF: $1+ \sum\limits_{n \geq 0} x^{n+1} L_n(1+x)$                & 8        & \seqnum{A249560} \\
\hline
OGF: $1 + \frac{x}{1-x} \sum\limits_{n \geq 0} \sum\limits_{k = 0}^{n+1} x^{i+k} L_{n + 1,k}
  \left(\frac{1}{1-x}\right)$                                    & 8        & \seqnum{A249561} \\
\hline
a recurrence relation                                            & 8        & \seqnum{A249563} \\ 
\hline
a recurrence relation                                            & 4        & \seqnum{A249562} \\ 
\hline
finite                                                           & 12       & - \\ \hline
\end{tabular}
\caption{The number of permutations avoiding a pair of patterns in $\PQ$.
  }\label{table:summary}
\end{table}

In Table~\ref{table:summary}, $C_n$ and $M_n$ are the Catalan and Motzkin
numbers, respectively. The sequences \seqnum{A249560}--\seqnum{A249563} were
added to the OEIS~\cite{motzkin} by the authors. In \seqnum{A249560} and
\seqnum{A249561}, $L_n(q) = \sum_{m = 0}^n {n \brack m}_q$ and $L_{n,k}(q) =
q^{n+\binom{k}{2}} {n-1 \brack k-1}_q$ enumerate specific types of lattice paths
and their areas.

It is sometimes possible to show that avoiding a given pattern $p$ is equivalent
to avoiding a simpler pattern $p'$. The following lemma states three instances
of this that are used here. This lemma is part of a more general result called
the shading lemma, due to Hilmarsson \emph{et al.}~\cite[Lemma 3.11]{shading}.

We first need to introduce the idea of a mesh pattern. In our previous pictures
we shaded entire rows or columns. In a mesh pattern we can shade individual
squares in the diagram.  As an example, below is a mesh pattern with a single
square shaded:
\[
    \mpattern{scale=\pattdispscale}{ 3 }{ 1/1, 2/3, 3/2 }{2/0}.
\]
A subsequence $\pi(i)\pi(j)\pi(k)$ of $\pi\in\sym_n$, that is order isomorphic
to $132$, is an occurrence of this particular pattern if there does not exist an
$m$ such that $j < m < k$ and $\pi(m) < \pi(i)$.  Mesh patterns satisfy a
property analogous to Remark~\ref{subpattern}: given a pattern $p = (\sigma,B)$,
where $B$ is the set of squares shaded, and $p' = (\sigma,B')$, where $B'
\subseteq B$, then $\Av_n(p') \subseteq \Av_n(p)$. The original definition of
mesh patterns was given by Br\"{a}nd\'{e}n and Claesson~\cite{mesh}.

\begin{lemma}\label{Shading1}
\begin{enumerate}[(i)]
\item\label{case-i}
  $\Av_n\left(\mpattern{scale=\patttextscale}{3}{1/1,2/3,3/2}{1/0,1/1,1/2,1/3}\right) =
   \Av_n\left(\mpattern{scale=\patttextscale}{3}{1/1,2/3,3/2}{}\right)
  $
\item\label{case-ii}
  $\Av_n\left(\mpattern{scale=\patttextscale}{3}{1/1,2/3,3/2}{2/0,2/1,2/2,2/3}\right) =
  \Av_n\left(\mpattern{scale=\patttextscale}{3}{1/1,2/3,3/2}{2/0}\right).
  $
\item\label{case-iii}
  $\Av_n\left(\mpattern{scale=\patttextscale}{3}{1/1,2/2,3/3}{1/0,1/1,1/2,1/3}\right) =
  \Av_n\left(\mpattern{scale=\patttextscale}{3}{1/1,2/2,3/3}{1/3}\right).
  $
\end{enumerate}
\end{lemma}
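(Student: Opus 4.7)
The plan is to prove all three parts by the same extremal-occurrence argument. Each statement has the form $\Av_n(p) = \Av_n(p')$ where $p$ is obtained from $p'$ by filling in additional shaded squares, so the inclusion $\Av_n(p') \subseteq \Av_n(p)$ is immediate from the mesh-pattern analogue of Remark~\ref{subpattern}. For the reverse, I argue contrapositively: every occurrence of $p'$ in $\pi$ yields an occurrence of $p$. In each part the extra shading of $p$ fills the entire column between two consecutive points of the underlying $\sigma$, so it suffices to produce an occurrence of $p'$ in which those two points are adjacent in $\pi$.

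For part (\ref{case-i}), with $\sigma = 132$, I would take a $132$ occurrence at positions $i < j < k$ with $\pi(i) = a$, $\pi(j) = b$, $\pi(k) = c$ (so $a < c < b$) minimizing $j - i$. If some $\ell$ satisfies $i < \ell < j$, I split by which of the four intervals $(-\infty, a)$, $(a, c)$, $(c, b)$, $(b, \infty)$ contains $\pi(\ell)$ and check by inspection that $(\ell, j, k)$ is again a copy of $132$ whenever $\pi(\ell) < c$, while $(i, \ell, k)$ is again a copy of $132$ whenever $\pi(\ell) > c$. In each subcase the new occurrence has strictly smaller gap $j - \ell$ or $\ell - i$, contradicting minimality; hence $j = i + 1$ and the full-column shading of $p$ holds for the chosen occurrence. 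Parts (\ref{case-ii}) and (\ref{case-iii}) follow the same template. In (\ref{case-ii}) I minimize $k - j$ over $132$ occurrences satisfying the $(2, 0)$ shading; that shading already rules out $\pi(\ell) < a$ for $j < \ell < k$, and the three remaining value-ranges produce $(i, j, \ell)$ or $(i, \ell, k)$ as a $132$ with strictly smaller $k - j$. In (\ref{case-iii}) I minimize $j - i$ over $123$ occurrences satisfying the $(1, 3)$ shading; that shading rules out $\pi(\ell) > c$ for $i < \ell < j$, and the three remaining value-ranges produce $(\ell, j, k)$ or $(i, \ell, k)$ as a $123$ with strictly smaller $j - i$.

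The main obstacle is purely case-analytic: enumerating the value-ranges for $\pi(\ell)$ and checking in each case that the specified replacement triple has the correct relative order. For (\ref{case-ii}) and (\ref{case-iii}) there is the additional routine check that the single shaded square of $p'$ remains empty under the substitution, but this is automatic because the new relevant column is a sub-interval of the old one and the value defining the forbidden row (namely $a$ in (\ref{case-ii}) and $c$ in (\ref{case-iii})) plays the same role in the new occurrence, so the weaker shading is preserved and the minimality argument can be iterated. Once these verifications are tabulated the extremal principle finishes the proof, and no idea beyond this is required.
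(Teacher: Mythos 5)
Your argument is correct, and all three verifications go through: in (i) the four value-ranges for $\pi(\ell)$ each yield a $132$ with a strictly smaller gap between the first two positions, and in (ii) and (iii) the single shaded box of $p'$ does exclude exactly the one value-range that would break the exchange, while the remaining ranges produce a new occurrence whose (smaller) relevant column is contained in the old one and whose reference value ($a$, resp.\ $c$) is unchanged, so the box of $p'$ stays empty and the extremal choice is contradicted. The difference from the paper is that the paper does not prove this lemma at all: it cites part (i) to Babson and Steingr\'imsson and parts (ii) and (iii) to the Shading Lemma of Hilmarsson et al. Your extremal-occurrence proof is essentially the standard argument underlying those cited results, specialized to these three patterns, so what you gain is a self-contained and elementary treatment; what the paper gains by citing is brevity and the fact that the Shading Lemma is a general criterion covering many such equalities at once rather than a case-by-case check. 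One small presentational point: it is worth stating explicitly (as you do implicitly) that an occurrence with the full column between two pattern points shaded is the same thing as an occurrence in which those two points are adjacent in the permutation, since that is the bridge between the mesh formulation and the vincular one.
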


\begin{proof}
  For (i) see \cite[Lemma~2]{vincular} and for (ii) and (iii) see
  \cite[Lemma~3.11]{shading}.
\end{proof}

\begin{remark}
  It is important to note that
  $\Av_n(132,\{2\},\emptyset) \neq \Av_n(132,\emptyset,\emptyset)$. For
  example, $2413 \notin \Av_4(132,\emptyset,\emptyset)$ but
  $2413 \in \Av_4(132,\{2\},\emptyset)$.
\end{remark}

\section{Catalan numbers (\texorpdfstring{\seqnum{A000108}}{A000108})}\label{classicsection}

There are 24 pairs $(p,r)\in\PQ$ such that $|\Av_n(p,r)|=C_n$. These break down
to five cases once symmetries are considered; see Table~\ref{big-table} in the
Appendix for a full list. They can all be simplified to the avoidance of a
single classical pattern. We look at a case for each argument. By
Remark~\ref{subpattern} we have:

\begin{proposition}\label{shadingonce}
  $\Av_n\left(
    \mpattern{scale=\patttextscale}{3}{1/1,2/2,3/3}{},
    \mpattern{scale=\patttextscale}{3}{1/1,2/2,3/3}{0/2,1/2,2/2,3/2}\right)
  =\Av_n(123)$.
\end{proposition}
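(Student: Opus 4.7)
The plan is to derive this proposition directly from Remark~\ref{subpattern}, which is precisely the tool designed for such reductions. First I would identify the two patterns explicitly: the first is $p = (123, \emptyset, \emptyset)$, the classical $123$, and the second is $r = (123, \emptyset, \{2\})$, the covincular refinement in which the middle and largest elements of an occurrence are forced to be consecutive in value. The underlying permutations agree, and the restriction sets satisfy $\emptyset \subseteq \emptyset$ and $\emptyset \subseteq \{2\}$.

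Applying Remark~\ref{subpattern} with the weaker pattern taken to be $p$ and the stronger pattern taken to be $r$ immediately yields $\Av_n(p) \subseteq \Av_n(r)$. Combinatorially, this just says that any occurrence of the covincular refinement $r$ is in particular an occurrence of the classical $123$, so classical $123$-avoidance is the stronger condition. Intersecting both sides with $\Av_n(r)$ gives no new information, and the intersection $\Av_n(p) \cap \Av_n(r)$ collapses to $\Av_n(p) = \Av_n(123)$, which is the statement.

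There is essentially no technical obstacle; the work is purely bookkeeping about which pattern is the stronger one. The only point to watch is the direction of the inclusion in Remark~\ref{subpattern}: adding constraints to a pattern makes occurrences harder to find, so the pattern with the larger restriction sets is the easier one to avoid, hence its avoidance set is the larger one. Once this is confirmed, the proposition follows in a single line, which is exactly what the excerpt hints at by prefacing the statement with \emph{``By Remark~\ref{subpattern} we have''}.
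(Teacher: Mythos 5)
Your proof is correct and matches the paper's argument exactly: the paper also derives this proposition in one line from Remark~\ref{subpattern}, using the inclusion $\Av_n(123,\emptyset,\emptyset) \subseteq \Av_n(123,\emptyset,\{2\})$ so that the intersection collapses to $\Av_n(123)$. You also get the direction of the inclusion right, which is the only place one could slip.
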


Similarly, by first using Lemma \ref{Shading1}\ref{case-i} on the first pattern
and then using Remark~\ref{subpattern} on the resulting pair we have:

\begin{proposition}\label{shadingtwice}
  $\Av_n\left(
    \mpattern{scale=\patttextscale}{ 3 }{ 1/1, 2/3, 3/2 }{1/0, 1/1, 1/2, 1/3},
    \mpattern{scale=\patttextscale}{ 3 }{ 1/1, 2/3, 3/2 }{0/1, 1/1, 2/1, 3/1}
  \right) = \Av_n(132)$.
\end{proposition}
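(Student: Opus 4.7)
The plan is to follow the two-step hint given in the excerpt. Writing $p$ for the first (vincular) pattern and $r$ for the second (covincular) pattern, Lemma~\ref{Shading1}\ref{case-i} already asserts that $\Av_n(p) = \Av_n(132)$, where on the right $132$ denotes the classical pattern. So after this first step the question reduces to proving the identity $\Av_n(132) \cap \Av_n(r) = \Av_n(132)$, which amounts to the single containment $\Av_n(132) \subseteq \Av_n(r)$.

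To finish, I would apply Remark~\ref{subpattern} with underlying permutation $\sigma = 132$, comparing the classical pattern $(132,\emptyset,\emptyset)$ against the covincular pattern $r = (132,\emptyset,\{1\})$. Since $\emptyset \subseteq \{1\}$, the remark yields $\Av_n(132) \subseteq \Av_n(r)$ immediately, and so the intersection collapses to $\Av_n(132)$. Chaining the two equalities gives the claimed $\Av_n(p,r) = \Av_n(p) \cap \Av_n(r) = \Av_n(132) \cap \Av_n(r) = \Av_n(132)$.

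There is essentially no obstacle here beyond being careful with the direction of the inclusion in Remark~\ref{subpattern}: adding shading to a pattern tightens the definition of an occurrence, so a permutation avoiding the \emph{less}-shaded pattern automatically avoids the more-shaded one. Once that is internalized, both steps are one-line invocations of previously cited results, and the proof can be written in a couple of sentences.
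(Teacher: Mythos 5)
Your proof is correct and follows exactly the route the paper itself indicates: first Lemma~\ref{Shading1}(i) to replace the vincular pattern $(132,\{1\},\emptyset)$ by classical $132$, then Remark~\ref{subpattern} (with the inclusion in the right direction, $\Av_n(132)\subseteq\Av_n(132,\emptyset,\{1\})$) to collapse the intersection. Nothing to add.
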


It is well known that the enumeration for avoiding any classical pattern of
length $3$ is given by the Catalan numbers; see Knuth \cite[Section 2.2.1,
Exercises 4 and 5]{classical}. Thus the sets in the propositions above all have
cardinality $C_n$.  In Section \ref{avoiding123} we present a new proof that
$|\Av_n(123)| = C_n$.

The rest of the pairs that are counted by the Catalan numbers all follow a very
similar argument so we move on to the next case.

\section{Central polygonal numbers (\texorpdfstring{\seqnum{A000124}}{A000124})}\label{central-polygonal}

After considering symmetries there are three pairs $(p,r)\in\PQ$ such that
$|\Av_n(p,r)|=\binom{n}{2} + 1$; see Table~\ref{big-table} in the Appendix. They
all reduce to the already known case $|\Av_n(123,231)| = \binom{n}{2} + 1$ done
by Simion and Schmidt~\cite{classicsubset}.

\begin{proposition}
  $\Av_n\left(
    \mpattern{scale=\patttextscale}{ 3 }{ 1/1, 2/2, 3/3 }{},
    \mpattern{scale=\patttextscale}{ 3 }{ 1/2, 2/3, 3/1 }{ 0/1, 1/1, 2/1, 3/1 }
  \right)=\Av_n(123,231)$.
\end{proposition}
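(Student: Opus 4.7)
The containment $\Av_n(123,231)\subseteq\Av_n(p,r)$ is immediate from Remark~\ref{subpattern}, since the covincular pattern $(231,\emptyset,\{1\})$ is obtained from classical $231$ by adding the shading $Y=\{1\}$, and $p$ is itself classical $123$.

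For the reverse inclusion, my plan is to prove the stronger pattern-level statement that a permutation contains the classical pattern $231$ if and only if it contains the covincular pattern $r=(231,\emptyset,\{1\})$; notice this does not use $123$-avoidance at all, and is exactly the style of conclusion captured by the general shading lemma of Hilmarsson \emph{et al.} The ``only if'' direction is again Remark~\ref{subpattern}. For the ``if'' direction, I will use a minimality argument: among all occurrences of $231$ in $\pi$ at positions $a<b<c$ (with $\pi(c)<\pi(a)<\pi(b)$), pick one that minimises the value-gap $g:=\pi(a)-\pi(c)$. I will show $g=1$, which is precisely the condition $j_2=j_1+1$ demanded by $r$.

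Assume for contradiction that $g\geq 2$ and fix a value $v$ with $\pi(c)<v<\pi(a)$, located at some position $k$ (necessarily $k\notin\{a,b,c\}$). The idea is to splice $v$ into the triple $(a,b,c)$ to produce another $231$-occurrence of strictly smaller gap. A short case split on the location of $k$ relative to $b$ suffices: when $k<b$, the triple $(k,b,c)$ forms a $231$-occurrence (using the chain $\pi(c)<v<\pi(a)<\pi(b)$) with gap $v-\pi(c)<g$; when $k>b$ and $k\ne c$, the triple $(a,b,k)$ forms a $231$-occurrence (again from the same chain) with gap $\pi(a)-v<g$. Either outcome contradicts the choice of $(a,b,c)$, so $g=1$ and the covincular occurrence appears.

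There is no serious obstacle here; the only subtlety is choosing the correct replacement triple, since a naive swap (e.g.\ replacing $a$ by $k$ while keeping $c$) can easily fail to be a $231$-pattern, whereas the two replacements above always succeed. I also intend to note that the argument actually shows the stronger equality $\Av_n(231,\emptyset,\{1\})=\Av_n(231)$ at the level of a single pattern, so the role of $p$ in the statement is only to intersect the resulting avoidance set with $\Av_n(123)$ and invoke Simion--Schmidt~\cite{classicsubset}.
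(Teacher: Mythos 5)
Your proof is correct, and it follows the same overall strategy as the paper --- showing that the covincular pattern $r=(231,\emptyset,\{1\})$ cuts out exactly the same avoidance class as classical $231$, so that $\Av_n(p,r)=\Av_n(123)\cap\Av_n(231)=\Av_n(123,231)$ --- but it differs in how that key equivalence is established. The paper disposes of it in one line: apply a symmetry of the square carrying $(231,\emptyset,\{1\})$ to $(132,\{1\},\emptyset)$ and invoke Lemma~\ref{Shading1}\ref{case-i}, which is itself imported from the literature. You instead prove the needed instance of the shading lemma from scratch: among all occurrences of $231$ you minimise the gap $\pi(a)-\pi(c)$ between the two smallest entries, and your two replacement triples $(k,b,c)$ for $k<b$ and $(a,b,k)$ for $k>b$ do both yield genuine $231$-occurrences with strictly smaller gap (the chain $\pi(c)<v<\pi(a)<\pi(b)$ verifies each case, and $k\in\{a,b,c\}$ is impossible since $v$ lies strictly between $\pi(c)$ and $\pi(a)$). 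Your route buys self-containedness at the cost of a short extremal argument; the paper's route buys brevity and reuses a lemma needed elsewhere anyway. One cosmetic slip: your ``if'' and ``only if'' labels are interchanged --- the direction that follows from Remark~\ref{subpattern} is that an occurrence of $r$ is in particular an occurrence of classical $231$, and the minimality argument supplies the converse --- but the mathematics underneath is unambiguous and correct.
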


\begin{proof}
  Use symmetry and Lemma~\ref{Shading1}\ref{case-i}.
\end{proof}

\begin{proposition}\label{shadeonebox}
  $\Av_n\left(
    \mpattern{scale=\patttextscale}{ 3 }{ 1/2, 2/3, 3/1 }{},
    \mpattern{scale=\patttextscale}{ 3 }{ 1/1, 2/2, 3/3 }{ 0/1, 1/1, 2/1, 3/1 }
  \right)=\Av_n(123,231)$.
\end{proposition}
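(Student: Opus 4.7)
The plan is to show double inclusion. One direction, $\Av_n(123,231) \subseteq \Av_n\bigl((231,\emptyset,\emptyset),(123,\emptyset,\{1\})\bigr)$, is immediate from Remark~\ref{subpattern}: the covincular pattern $(123,\emptyset,\{1\})$ has strictly more shading than the classical $123$, so any permutation avoiding classical $123$ automatically avoids it.

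For the reverse inclusion I would prove the contrapositive: if $\pi$ avoids $231$ and contains the classical pattern $123$, then it already contains an occurrence of $(123,\emptyset,\{1\})$, i.e.\ a $123$-occurrence whose two smallest entries are consecutive in value. To do this I would apply an extremal principle: among all occurrences of $123$ in $\pi$, choose values $a < b < c$ at positions $i < j < k$ minimizing the gap $b-a$. If $b = a+1$ we are done, so assume $b > a+1$ and fix some value $v$ with $a < v < b$, occurring at position $p$. The argument is then a case split on how $p$ compares to $i, j, k$.

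In the three cases $p < i$, $i < p < j$, and $j < p < k$, I would substitute $v$ into the triple $(a,b,c)$ in place of $a$ or $b$ to produce a new $123$-occurrence whose bottom gap is $v - a$ or $b - v$, both strictly smaller than $b-a$, contradicting minimality. For instance, when $p < i$ the triple $(v,b,c)$ at positions $(p,j,k)$ is order-isomorphic to $123$ with gap $b-v < b-a$; the other two cases use $(a,v,b)$ and $(a,v,c)$ respectively.

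The main obstacle (and the only place the $231$-hypothesis enters) is the remaining case $p > k$, where no such substitution produces a $123$-pattern at all. Here I expect to use instead that $v < b < c$ sit at positions $j < k < p$, so $(b,c,v)$ is an occurrence of $231$ in $\pi$, contradicting the $231$-avoidance hypothesis. Combining the four cases yields the desired contrapositive and closes the proof.
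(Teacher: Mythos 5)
Your proof is correct, but it takes a genuinely different route from the paper's. The paper first invokes the shading lemma (Lemma~\ref{Shading1}) to replace the fully shaded row of the covincular pattern $(123,\emptyset,\{1\})$ by a mesh pattern with only the single box $(3,1)$ shaded; the contrapositive argument then has just one case to handle, namely a point in that box, which together with the $123$-occurrence forms a $1342$ and hence a $231$. You instead work with the full covincular pattern directly and prove the needed implication by an extremal principle: among all $123$-occurrences choose one minimizing the value gap $b-a$, and show that any witness $v$ with $a<v<b$ yields either a smaller-gap occurrence (when its position $p$ satisfies $p<i$, $i<p<j$, or $j<p<k$) or a $231$ (when $p>k$). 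Your four positional cases correspond exactly to the four shaded boxes $(0,1),(1,1),(2,1),(3,1)$ of the row: the first three are precisely the ones the shading lemma disposes of by re-choosing the occurrence, and the fourth is the residual box where the paper, like you, must appeal to $231$-avoidance. So your argument is in effect a self-contained inlining of the relevant instance of the shading lemma; what it buys is independence from the cited external result, at the cost of a longer case analysis. All four cases check out (in each of the first three the substituted triple is order-isomorphic to $123$ with strictly smaller bottom gap, and in the last, $v<b<c$ at positions $j<k<p$ is indeed a $231$), and the easy inclusion via Remark~\ref{subpattern} is handled as in the paper.
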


\begin{proof}
  After applying Lemma \ref{Shading1} we see that the set we are interested in
  is
  \[
  \mathcal{B} = \Av\left( \mpattern{scale=\patttextscale}{ 3 }{ 1/2, 2/3, 3/1 }{},
    \mpattern{scale=\patttextscale}{ 3 }{ 1/1, 2/2, 3/3 }{ 3/1 }
  \right).
  \]
  We want to show that $\mcB$ is equal to
  $\mcA=\Av(123,231)$.  It is clear than
  $\mcA\subseteq \mcB$. We will show
  $\mcB\subseteq \mcA$ by contraposition. Assume that
  $\pi\notin \mcA$.  If $\pi$ contains $231$ then it immediately
  follows that $\pi\notin \mcB$. If $\pi$ contains $123$ then
  either $\pi$ contains
  \[
    \mpattern{scale=\pattdispscale}{ 3 }{ 1/1, 2/2, 3/3 }{ 3/1 }
  \] in
  which case $\pi\notin\mcB$, or else it contains the pattern
  where there is a point in the shaded square. This would create an
  occurrence of $1342$ which contains an occurrence of $231$ and hence
  we would have $\pi\notin\mcB$. Therefore
  $\mcA = \mcB$.
\end{proof}

\begin{proposition}
  $\Av_n\left(
    \mpattern{scale=\patttextscale}{ 3 }{ 1/2, 2/3, 3/1 }{ 2/0, 2/1, 2/2, 2/3 },
    \mpattern{scale=\patttextscale}{ 3 }{ 1/1, 2/2, 3/3 }{ 0/1, 1/1, 2/1, 3/1 }
  \right)=\Av_n(123,231)$.
\end{proposition}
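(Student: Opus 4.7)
The plan is to reduce the claim to Proposition~\ref{shadeonebox}. Let $p = (231,\{2\},\emptyset)$ and $r = (123,\emptyset,\{1\})$, and write $\mcA = \Av_n(123,231)$ and $\mcB = \Av_n(p,r)$. The inclusion $\mcA \subseteq \mcB$ is immediate from Remark~\ref{subpattern}. For the reverse inclusion, the key step will be to prove the single-pattern identity $\Av_n(p) = \Av_n(231)$; once this is in hand,
\[
\mcB = \Av_n(p)\cap\Av_n(r) = \Av_n(231)\cap\Av_n(r) = \Av_n\bigl((231,\emptyset,\emptyset),\,r\bigr),
\]
and the last set equals $\mcA$ by Proposition~\ref{shadeonebox}.

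To establish $\Av_n(p) = \Av_n(231)$, the nontrivial direction says that every permutation containing $231$ classically also contains the vincular pattern $p$; equivalently, any $231$-occurrence can be \emph{promoted} to one in which the letters playing the roles of $3$ and $1$ sit at consecutive positions. I would argue this by a minimality swap: among all triples $(i,j,k)$ with $i<j<k$ and $\pi(k)<\pi(i)<\pi(j)$, pick one minimizing $k-j$, and show $k-j=1$. If instead $k-j\ge 2$, take any $m$ with $j<m<k$. The case $\pi(m)>\pi(i)$ produces the $231$-occurrence $(i,m,k)$ with strictly smaller gap $k-m$, while the case $\pi(m)<\pi(i)$ produces the $231$-occurrence $(i,j,m)$ with strictly smaller gap $m-j$. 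Either outcome contradicts the choice of $(i,j,k)$, forcing $k=j+1$ and yielding an occurrence of $p$.

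The only real obstacle is the minimality argument just sketched, and it turns out to be quite clean precisely because in the pattern $231$ the letter $1$ is the rightmost entry: every position strictly between the $3$-role and the $1$-role is either large enough to take over the $3$-role or small enough to take over the $1$-role, so no further case analysis is required. Combining the identity $\Av_n(p) = \Av_n(231)$ with Proposition~\ref{shadeonebox} then gives $\mcB = \mcA$, completing the proof.
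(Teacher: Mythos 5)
Your proof is correct and follows essentially the same route as the paper: reduce the vincular pattern $(231,\{2\},\emptyset)$ to the classical pattern $231$, then invoke Proposition~\ref{shadeonebox}. The only difference is that where the paper cites Lemma~\ref{Shading1}\ref{case-i} (applied via the reverse symmetry) for the identity $\Av_n\bigl((231,\{2\},\emptyset)\bigr)=\Av_n(231)$, you prove it directly with a minimal-gap argument, which is a correct and self-contained substitute.
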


\begin{proof}
  Apply Lemma~\ref{Shading1}\ref{case-i} and then Proposition~\ref{shadeonebox}.
\end{proof}

\section{Powers of 2 (\texorpdfstring{\seqnum{A000079}}{A000079})}\label{structuresection}

After considering symmetries there are 19 different pairs $(p,r)\in\PQ$ such
that $\Av_n(p,r)=2^{n-1}$ and we list them in Table~\ref{big-table} of
the Appendix. Most of the cases reduce to
\[
    |\Av_n(123,132)| = |\Av_n(132,312)| = |\Av_n(231,312)| = 2^{n-1},
\]
as shown by Simion and Schmidt \cite{classicsubset}. There are two non-trivial
cases where we use the structure of the set to find a generating function for
the enumeration.

\begin{proposition}\label{2nfirst}
  Let $p=\mpattern{scale=\patttextscale}{3}{1/1,2/2,3/3}{2/0,2/1,2/2,2/3}$ and
  $r = \mpattern{scale=\patttextscale}{3}{1/3,2/1,3/2}{0/2,1/2,2/2,3/2}$.
  The number of permutations in
  \[
  \Av_n(p, r) \quad\text{is}\quad 2^{n-1}, \quad\text{for}\quad n \geq 1.
  \]
\end{proposition}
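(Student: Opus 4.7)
The plan is to establish the recursion $a_n := |\Av_n(p,r)| = 2\,a_{n-1}$ for $n \geq 2$, with base case $a_1 = 1$. The backbone is the structural observation that \emph{if $\pi \in \Av_n(p,r)$ has $1$ at position $i$ with $i < n$, then $\pi(i+1) = n$}. Once this is in hand, $\Av_n(p,r)$ partitions into \emph{Type~A} (those $\pi$ with $\pi(n) = 1$) and \emph{Type~B} (those with $\pi(i) = 1$ and $\pi(i+1) = n$ for some $i < n$), and I would exhibit a bijection from each type to $\Av_{n-1}(p,r)$.

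For the structural observation, I would first show that the suffix $\pi(i+1)\pi(i+2)\cdots\pi(n)$ is strictly decreasing. Every entry in this suffix exceeds $1$, so none is a left-to-right minimum; by $p$-avoidance any ascent in the suffix, combined with the $1$ at position $i$, would realize an occurrence of $p$, and ruling out all such ascents forces the suffix to be decreasing. Let $v := \pi(i+1)$, the maximum of the suffix. If $v < n$, then $v+1$ must appear in the prefix $\pi(1)\cdots\pi(i-1)$ at some position $k$, and the triple $(k,i,i+1)$ carries values $(v+1,\,1,\,v)$ --- a $312$-shaped pattern whose top and middle values are consecutive integers, which is precisely an occurrence of $r$. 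This contradicts $r$-avoidance, so $v = n$.

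With the structural observation in hand, the two bijections with $\Av_{n-1}(p,r)$ are natural. For Type~A, delete the trailing $1$ and decrement every surviving entry by $1$; the inverse increments every entry of $\sigma$ and appends a new minimum. A trailing $1$ can neither complete an ascent forming $p$ nor serve as the middle value of $r$, so avoidance is preserved in both directions. For Type~B, delete the $n$ at position $i+1$; the inverse inserts $n$ immediately after the $1$ in $\sigma \in \Av_{n-1}(p,r)$. The delicate verification is that insertion never creates a new $r$-occurrence: any such occurrence would have $n$ in the top role and force some $n-1$ at a later position $c$; tracing back to $\sigma$, the maximum value $n-1$ would then sit at position $c-1 \geq i+2$, and $p$-avoidance in $\sigma$ would force its immediate predecessor to be a left-to-right minimum of $\sigma$, hence equal to $1$, contradicting the uniqueness of the position of $1$ in $\sigma$.

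The main obstacle is the structural observation; its proof is short but rests on spotting the specific $r$-occurrence hidden in the triple $(k,i,i+1)$. With the two bijections established, induction on $n$ from $a_1 = 1$ yields $a_n = 2^{n-1}$.
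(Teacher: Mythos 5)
Your proof is correct, but it takes a genuinely different route from the paper's. The paper establishes the full structure of an avoider around its minimum: the suffix after the $1$ is a decreasing run consisting of the largest values, everything to its left is (recursively) a smaller avoider, and the ordinary generating function $A = xA/(1-x) + 1 = (1-x)/(1-2x)$ falls out by multiplying the component generating functions. You instead prove only the weaker structural fact that the entry immediately following a non-final $1$ must equal $n$ (your $312$-with-consecutive-top-values argument for why $v=n$ is exactly the right use of the covincular restriction), and then convert this into the doubling recursion $a_n = 2a_{n-1}$ via two deletion bijections with $\Av_{n-1}(p,r)$: strip a trailing $1$, or strip the $n$ sitting after the $1$. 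The delicate step --- that re-inserting $n$ after the $1$ cannot create an $r$-occurrence because that would force the maximum of the shorter permutation to sit at a position whose predecessor is simultaneously forced to be and forbidden from being a left-to-right minimum --- is handled correctly. The paper's approach buys a complete description of the avoiders (and the generating function with no induction), while yours is more elementary, needing only a local structural lemma and explicit bijections; both are valid, and yours would transfer more readily to situations where the global structure is less rigid.
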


\begin{proof}
  Let $\mcA$ be the set of avoiders in question and let
  $\pi\in\mcA$. As $\pi$ avoids $p$, the points after the minimum of
  $\pi$ form a decreasing sequence. Moreover, in order to ensure that the
  permutation avoids $r$, every point to the right of the minimum must be
  greater than every point on the left of the minimum. Therefore, all non-empty
  permutations of $\mcA$ have the form
  \[
    \metapatt{0.6}{1/2}{1/v,1/h,2/h}{}{1/1/p}[0/0,1/0,0/2,1/1]
    [0/1/1/2/$\mcA$,1/2/2/3/\decr]
  \]
  where $\mcA$ symbolizes a (possibly empty) permutation which avoids the
  patterns, and \,\raisebox{3pt}{\scalebox{0.6}{$\decr$}}\, symbolizes a
  decreasing permutation. As the structure is so rigid we can find the ordinary
  generating function of the avoiders by multiplying together the ordinary
  generating functions of the component parts. There is one decreasing
  permutation of length $n$ and so the ordinary generating function is
  $1/(1-x)$. The ordinary generating function of a single point is $x$. If $A$
  is the ordinary generating function for $\mcA$, then it follows that
  \[
    A = A \cdot x \cdot \frac{1}{1-x} + 1
  \]
  where we add $1$ for the empty permutation which trivially avoids both
  patterns. Rearranging we get
  \[
    A = \frac{1-x}{1-2x} = 1 + \sum_{n\geq 1} 2^{n-1}x^n. \qedhere
  \]
\end{proof}

\begin{proposition}\label{2nsecond}
  Let $p = \mpattern{scale=\patttextscale}{3}{1/1,2/2,3/3}{}$ and $r =
  \mpattern{scale=\patttextscale}{3}{1/3,2/1,3/2}{0/1,1/1,2/1,3/1}$. The
  number of permutations in
  \[
    \Av_n(p, r) \quad\text{is}\quad 2^{n-1}, \quad\text{for}\quad n \geq 1.
  \]
\end{proposition}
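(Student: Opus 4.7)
The plan is to establish the recurrence $|\mcA_n| = 2\,|\mcA_{n-1}|$ for $n \geq 2$, where $\mcA = \Av(p, r)$, via a 2-to-1 map $\phi\colon \mcA_n \to \mcA_{n-1}$. Define $\phi$ by deleting the final entry $\pi(n)$ and then subtracting $1$ from every remaining entry larger than $\pi(n)$. Classical $123$-avoidance is inherited automatically. The only delicate case for $r$-avoidance is when a would-be $(312, \emptyset, \{1\})$-occurrence in $\phi(\pi)$ uses the consecutive pair $b' = \pi(n) - 1$, $c' = \pi(n)$ that straddles the deleted value; then the witness $a'$ satisfies $a' \geq \pi(n) + 1$, and in $\pi$ itself the triple consisting of $a$, the entry $\pi(n) - 1$, and the deleted entry $\pi(n)$ at position $n$ forms a genuine $r$-occurrence, contradicting $\pi \in \mcA_n$.

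For the 2-to-1 claim, fix $\pi' \in \mcA_{n-1}$ and write $w = \pi'(1)$. The key positional lemma is that the values $1, 2, \ldots, w-1$ appear in $\pi'$ in strictly decreasing order of position. This follows by contradiction from $r$-avoidance of $\pi'$: any violation supplies consecutive integer values $i, i+1 \leq w-1$ with $i$ before $i+1$, which combined with $w$ sitting at position~$1$ forms a forbidden $(312, \emptyset, \{1\})$-pattern.

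I would then show that the valid choices of $v = \pi(n)$ are exactly $v = 1$ and $v = w + 1$. The value $v = 1$ always works, since the appended $1$ is too small to play any of the three roles in a $123$ or an $r$-occurrence. The value $v = w + 1$ works because the positional lemma together with $\pi'(1) = w$ places the values $1, \ldots, w$ of $\pi'$ in decreasing positional order, so no $123$ is created by the appended $v$, while the $r$-condition ``$v - 1 = w$ precedes every entry $\geq v$'' holds since $w$ occupies position~$1$. Conversely, for $2 \leq v \leq w$ the entry $w$ at position~$1$ precedes $v-1$ in $\pi'$ and, after the shift, becomes a value exceeding $v$, producing an $r$-occurrence with the appended $v$; for $v \geq w + 2$ the entry $w$ belongs to $\{1, \ldots, v-1\}$ yet sits at position~$1$, which is incompatible with the decreasing positional order of $\{1, \ldots, v-1\}$ that $123$-avoidance now requires, hence an increasing pair in $\{1,\dots,v-1\}$ combines with the appended $v$ to form a $123$.

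Therefore $|\mcA_n| = 2\,|\mcA_{n-1}|$; combined with $|\mcA_1| = 1$ this yields $|\mcA_n| = 2^{n-1}$. The main obstacle I anticipate is the straddling case in the well-definedness of $\phi$, where covincular and classical avoidance could superficially diverge; once this and the positional lemma are handled, enumerating the preimages is a routine case analysis.
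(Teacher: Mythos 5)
Your proof is correct, but it takes a genuinely different route from the paper. The paper argues structurally in one shot: conditioning on the leftmost entry $\ell=\pi(1)$, avoidance of $p=123$ forces the entries above $\ell$ to be decreasing, and avoidance of $r=(312,\emptyset,\{1\})$ forces the entries below $\ell$ to be decreasing (any ascent $i,i+1$ below $\ell$ would combine with $\ell$ into an occurrence of $r$); conversely any such interleaving of two decreasing runs avoids both patterns, since the value gap across $\ell$ blocks the covincular adjacency, so the avoiders are encoded by binary strings of length $n-1$. You instead delete the last entry and prove the succession rule $|\mcA_n|=2|\mcA_{n-1}|$, identifying the two active insertion values as $v=1$ and $v=\pi'(1)+1$. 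I checked the delicate points of your argument and they hold: the only consecutive value pair that can be created by the renormalization is $(\pi(n)-1,\pi(n))$ coming from $(\pi(n)-1,\pi(n)+1)$ in $\pi$, and your repair using the deleted entry at position $n$ does produce a genuine $r$-occurrence; your positional lemma is the same observation the paper makes about the entries below $\pi'(1)$; and the three-way case analysis on $v$ is complete and correct (for $2\le v\le w$ the shifted $\pi'(1)$ supplies the ``3'' of an $r$-occurrence ending in the appended $v$, and for $v\ge w+2$ the pair $(w,v-1)$ is an ascent below $v$). The paper's argument buys an explicit description of every avoider and an immediate bijection with binary words; yours buys a generating-tree/insertion-encoding viewpoint (a succession rule in which every node has exactly two children) at the cost of the renormalization bookkeeping. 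Either is acceptable.
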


\begin{proof}
  Let $\mcA$ be the set of avoiders in question. Consider the leftmost point
  $\ell$ of a permutation in $\mcA$. To avoid $p$ the points greater than $\ell$
  must form a decreasing sequence and similarly to avoid $r$ the points less
  than $\ell$ must form decreasing sequence. Therefore the permutations in
  $\mcA$ have the form below.
  \[
    \metapatt{0.6}{1/1}{1/v,1/h}{}{1/1/p}[0/0,0/1][1/0/2/1/\decr,1/1/2/2/\decr][]
  \]
  A permutation matching this picture cannot contain an occurrence of $p=123$,
  and every occurrence of $312$ will have the point $\ell$ preventing it from
  being an occurrence of $r$. Hence these can be encoded with binary strings and
  so there are $2^{n-1}$ such permutations.
\end{proof}

\section{Left-to-right minima boundaries (\texorpdfstring{\seqnum{A121690}}{A121690})}\label{lrmsection}
After symmetries there is exactly one pair $(p,r)\in\PQ$ enumerated by the
formula in the following proposition.
\begin{proposition}\label{choose}
  Let $p = \mpattern{scale=\patttextscale}{3}{1/1,2/2,3/3}{2/0,2/1,2/2,2/3}$ and
  $r = \mpattern{scale=\patttextscale}{3}{1/1,2/3,3/2}{0/2,1/2,2/2,3/2}$.
  The number of permutations in
  \[
      \Av_n(p,r)
      \quad\text{is}\quad
      \sum_{k=0}^n \binom{\binom{k+1}{2}}{n-k}.
  \]
\end{proposition}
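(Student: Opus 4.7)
The plan is first to pin down the structure of $p$-avoiders. The shading of $p$ forces every ascent $\pi(j) < \pi(j+1)$ to have $\pi(j)$ a left-to-right minimum (LR-min), since otherwise any earlier smaller entry together with $\pi(j),\pi(j+1)$ would form an occurrence of $p$. Writing the LR-mins of $\pi$ as $v_1 > v_2 > \cdots > v_k = 1$ at positions $\ell_1 < \cdots < \ell_k$, each $v_t$ is therefore followed by a (possibly empty) strictly decreasing \emph{block} of entries, all exceeding $v_t$, before the next LR-min. I call $[\ell_t, \ell_{t+1}-1]$ \emph{slot~$t$} (so slot $t$ holds $v_t$ and its block), and write $s(u)$ for the slot containing a value $u \in [n]$.

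Next I would translate $r$-avoidance into a clean condition on $s$. Decompose $N = [n] \setminus \{v_1, \ldots, v_k\}$ into maximal runs of consecutive integers. The key claim is that $\pi$ avoids $r$ iff on every such run $u_1 < u_2 < \cdots < u_m$ the slot indices $s(u_1) < s(u_2) < \cdots < s(u_m)$ are strictly increasing. One direction: any $r$-occurrence is a triple of positions $i < j < k$ with $\pi(i) < \pi(k) < \pi(j)$ and $\pi(j) = \pi(k)+1$; writing $a,b,c$ for these values, the presence of $a$ before $b$ with $a < b$ forces both $b$ and $c$ to be non-LR-min, so they lie in a common chain, and $b$ preceding $c$ yields $s(b) \le s(c)$. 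Conversely, if the condition fails at some pair $c < b = c+1$ with $s(b) \le s(c)$, then $b$ precedes $c$ in $\pi$ and the LR-min $v_{s(b)}$ of $b$'s slot lies still earlier, satisfies $v_{s(b)} < b$, and satisfies $v_{s(b)} \ne c$ (since $c$ is non-LR-min), so $v_{s(b)} < c$ and the triple $v_{s(b)}, b, c$ is an $r$-occurrence.

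Counting is then a short generating-function exercise. Fix $k$; the LR-min set is recorded by a composition $(c_0, c_1, \ldots, c_{k-1})$ of $n-k$, where $c_t$ counts the integers strictly between $v_{t+1}$ and $v_t$ (and $c_0$ those above $v_1$). The values in the $t$-th chain lie in the open interval between $v_{t+1}$ and $v_t$, so their admissible slots are exactly $\{t+1, \ldots, k\}$, a set of size $k-t$; the chain condition then says that the $c_t$ chain members occupy a strictly increasing subset of these slots, contributing $\binom{k-t}{c_t}$ choices. Hence
\[
|\{\pi \in \Av_n(p,r) : \pi \text{ has } k \text{ left-to-right minima}\}| = \sum_{c_0 + \cdots + c_{k-1} = n-k} \prod_{t=0}^{k-1} \binom{k-t}{c_t} = [x^{n-k}](1+x)^{\binom{k+1}{2}} = \binom{\binom{k+1}{2}}{n-k},
\]
and summing over $k$ from $0$ to $n$ yields the stated formula. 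The main obstacle is the $r$-avoidance translation: verifying in both directions the subtle interplay between a failure of the chain condition and the LR-min that witnesses the forbidden pattern. Once the chain characterization is in hand, the enumeration reduces to the identity $\prod_{t=0}^{k-1}(1+x)^{k-t} = (1+x)^{\binom{k+1}{2}}$.
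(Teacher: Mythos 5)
Your argument is correct and takes essentially the same route as the paper: both decompose an avoider by its left-to-right minima and show that with $k$ minima the remaining points occupy a staircase of $\binom{k+1}{2}$ cells, each holding at most one point, so an avoider is determined by an $(n-k)$-subset of cells (your run/slot characterization of $r$-avoidance is a rigorous rendering of the paper's pictorial overlay of the two grid structures). Your final sum $\sum_{c_0+\cdots+c_{k-1}=n-k}\prod_{t}\binom{k-t}{c_t}$ is just the coefficient of $x^{n-k}$ in $(1+x)^{\binom{k+1}{2}}$, i.e., the same count the paper reads off directly.
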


\begin{figure}[ht]
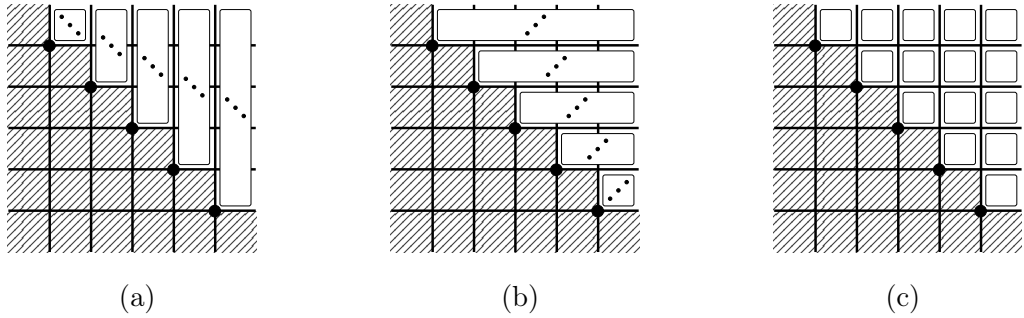

  \centering
  \mbox{
  \begin{subfigure}[t]{0.3\textwidth}
    \centering
    \metapatt{0.55}{5/5}{}{1/v,2/v,3/v,4/v,5/v,1/h,2/h,3/h,4/h,5/h}{1/5,2/4,3/3,4/2,5/1}
    [0/0,0/1,0/2,0/3,0/4,0/5,1/0,1/1,1/2,1/3,1/4,2/0,2/1,2/2,2/3,3/0,3/1,3/2,4/0,4/1,5/0]
    [1/5/2/6/\decr,2/4/3/6/\decr,3/3/4/6/\decr,4/2/5/6/\decr,5/1/6/6/\decr][]
    \caption{}\label{fig:struct-a}
  \end{subfigure}
  \begin{subfigure}[t]{0.3\textwidth}
    \centering
    \metapatt{0.55}{5/5}{}{1/v,2/v,3/v,4/v,5/v,1/h,2/h,3/h,4/h,5/h}{1/5,2/4,3/3,4/2,5/1}
    [0/0,0/1,0/2,0/3,0/4,0/5,1/0,1/1,1/2,1/3,1/4,2/0,2/1,2/2,2/3,3/0,3/1,3/2,4/0,4/1,5/0]
    [1/5/6/6/\incr,2/4/6/5/\incr,3/3/6/4/\incr,4/2/6/3/\incr,5/1/6/2/\incr][]
    \caption{}\label{fig:struct-b}
  \end{subfigure}
  \begin{subfigure}[t]{0.3\textwidth}
    \centering
    \metapatt{0.55}{5/5}{}{1/v,2/v,3/v,4/v,5/v,1/h,2/h,3/h,4/h,5/h}{1/5,2/4,3/3,4/2,5/1}
    [0/0,0/1,0/2,0/3,0/4,0/5,1/0,1/1,1/2,1/3,1/4,2/0,2/1,2/2,2/3,3/0,3/1,3/2,4/0,4/1,5/0]
    [1/5/2/6/,2/5/3/6/,3/5/4/6/,4/5/5/6/,5/5/6/6/,2/4/3/5/,3/4/4/5/,4/4/5/5/,5/4/6/5/,
    3/3/4/4/,4/3/5/4/,5/3/6/4/,4/2/5/3/,5/2/6/3/,5/1/6/2/][]
    \caption{}\label{fig:struct-c}
  \end{subfigure}
  }
  \caption{The structure of $\Av_n(p,r)$ from Proposition~\ref{choose}}
\end{figure}

In the following proof we will consider the set of left-to-right minima, which
we call the \emph{boundary} of the permutation.
In later sections we will consider other types of boundaries that use
left-to-right minima, right-to-left minima or right-to-left maxima and perhaps
unions of these. It should be clear from context which type of boundary it is.

\begin{proof}
  Consider the minimum point, $1$, of a permutation in $\Av_n(p,r)$. From the
  pattern $p$ we see that the points to the right of $1$ form a decreasing
  sequence. Moreover, the points vertically between any two adjacent points on
  the boundary must form a decreasing sequence, giving the structure in
  Figure~\ref{fig:struct-a}, where the permutation we have drawn has five
  left-to-right minima.

  Now, consider the leftmost point, say $\ell$, of a permutation in
  $\Av_n(p,r)$. From the pattern $r$ we see that the points greater than $\ell$
  must form an increasing sequence. Moreover, considering the points
  horizontally between any two adjacent points on the boundary we get the
  structure in Figure~\ref{fig:struct-b}, where, again, the permutation has five
  left-to-right minima. When we overlay the given conditions above we get the
  structure in Figure~\ref{fig:struct-c}, where each of the squares in the
  diagram must be both increasing and decreasing. Therefore each square must be
  empty or contain a single point. Also, the structure of the rows and columns
  will be determined as increasing and decreasing, respectively, no matter which
  squares have points. Therefore, placing any number of points into the squares
  (at most one in each) will create a unique permutation (see
  Figure~\ref{fig:example}).
  \begin{figure}[ht]
    \centering
    \begin{tikzpicture}[scale=0.55]
      \draw[black, line width=0.8]
        (1,3)--(2,3)--(2,2)--(3,2)--(3,1)
        (1,4)--(1,3)
        (3,1)--(4,1)
        (4,2)--(3,2)--(3,4)
        (4,3)--(2,3)--(2,4);
      \foreach [count=\i] \x in {3,2,1}
        \filldraw(\i*1, \x) circle (0.12);
      \filldraw(1.5,3.25) circle (0.08);
      \filldraw(2.33,3.5) circle (0.08);
      \filldraw(3.25,3.75) circle (0.08);
      \filldraw(2.66,2.33) circle (0.08);
      \filldraw(3.5,2.66) circle (0.08);
      \filldraw(3.75,1.5) circle(0.08);
    \end{tikzpicture}
    \caption{The permutation $673841952 \in \Av_9(p,r)$ from Proposition~\ref{choose}}
    \label{fig:example}
  \end{figure}
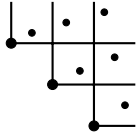
  Create a permutation $\pi \in \Av_n(p, r)$ with $k$ left-to-right
  minima. We need to know how to place the remaining $n - k$ points. There will
  be $\binom{k+1}{2}$ squares available to choose from, and placing the $n-k$
  points into any subset of those squares will create a unique permutation.
  Thus, summing over the number of left-to-right minima, we get
  \[
    |\Av_n(p,r)| = \sum_{k=0}^n \binom{\binom{k+1}{2}}{n-k}. \qedhere
  \]
\end{proof}

We note from the above proof and the formula that a
permutation with $k$ left-to-right minima will be of length at most
$k + \binom{k+1}{2}$. Also, a length $n$ avoider will have at least
$\left\lceil{\frac{{-3 + \sqrt{9 + 8n}}}{2}}\right\rceil$ left-to-right minima.

\section{Barred patterns (\texorpdfstring{\seqnum{A098569}}{A098569})}\label{barsection}

After considering symmetries there are two pairs $(p,r) \in \PQ$ enumerated by
the formula in the following proposition.

\begin{proposition}\label{barred1}
  Let $p = \mpattern{scale=\patttextscale}{3}{1/1,2/2,3/3}{2/0,2/1,2/2,2/3}$ and
  $r = \mpattern{scale=\patttextscale}{3}{1/1,2/2,3/3}{0/2,1/2,2/2,3/2}$. The
  number of permutations in
  \[
  \Av_n(p,r)
  \quad\text{is}\quad
  \sum_{k=0}^n \binom{\binom{k+1}{2} + n - k - 1}{n - k}.
  \]
\end{proposition}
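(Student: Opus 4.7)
The plan is to adapt the staircase/cell decomposition of Proposition~\ref{choose} to the new second pattern $r$. Since $p$ is unchanged, the $p$-avoidance argument from that proof applies verbatim and yields the structure of Figure~\ref{fig:struct-a}: above the staircase of left-to-right minima $m_1>m_2>\cdots>m_k=1$ at positions $p_1<p_2<\cdots<p_k$, each column strip (positions strictly between consecutive left-to-right minima, or after the last) contains a decreasing sequence of non-minima. The work to be done is understanding what $r=(123,\emptyset,\{2\})$ contributes.

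First I would rephrase $r$-avoidance: $\pi\in\Av_n(p,r)$ if and only if for every non-left-to-right-minimum value $b<n$, the value $b+1$ lies to the left of $b$. This holds because every non-minimum has some strictly smaller left-to-right minimum to its left, so an occurrence of $r$ appears the moment $b+1$ sits to the right of $b$. Grouping the non-minima into rows, where row $j$ (for $j=1,\ldots,k$) denotes the value range $(m_j,m_{j-1})$ with the convention $m_0=n+1$, I would show that two consecutive integer non-minima necessarily share a row (an intermediate value equal to some $m_i$ would force $b$ or $b+1$ itself to be a minimum). Combined with the $p$-structure, which forces a non-minimum in row $j$ into some column $c\ge j$, the $r$-condition collapses to the single rule: within each row, the column used is non-increasing as the value increases. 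The case $b+1=m_i$ requires no separate argument, because a non-minimum in row $i+1$ already lives in a column $\ge i+1$ and hence to the right of $p_i$.

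This exhibits $\binom{k+1}{2}$ cells $(j,c)$ with $1\le j\le c\le k$, each of which may now hold arbitrarily many non-minima. I would then establish a bijection between $\Av_n(p,r)$ with exactly $k$ left-to-right minima and tuples of non-negative integers $(n_{j,c})$ with $\sum n_{j,c}=n-k$: given such a tuple, recover the values $m_j$ via $m_{j-1}-m_j-1=\sum_c n_{j,c}$, then in each row $j$ place the top $n_{j,c}$ remaining values into column $c$ for $c=j,j+1,\ldots,k$, and within each column list the contents left-to-right in decreasing value, with row-$1$ entries preceding row-$2$ entries and so on. By stars and bars the number of such tuples is $\binom{\binom{k+1}{2}+n-k-1}{n-k}$, and summing over $k$ produces the claimed formula.

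The main obstacle will be the reduction in the second paragraph: checking that the global condition ``$b+1$ is left of $b$'' really does reduce to the local within-row non-increasing-column rule after the boundary interactions with left-to-right minima and the potential cross-row cases have all been ruled out. Once that is in place the bijection and the stars-and-bars count are both routine.
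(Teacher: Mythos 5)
Your proposal is correct and follows essentially the same route as the paper: decompose by the $k$ left-to-right minima into the staircase of $\binom{k+1}{2}$ cells, observe that each cell holds an arbitrary decreasing run whose placement is forced, and count by stars and bars. The only difference is one of detail --- the paper simply asserts that the strips between consecutive minima are decreasing both vertically (from $p$) and horizontally (from $r$), whereas you derive the horizontal condition carefully from the consecutive-values reformulation of $r$-avoidance, which is a worthwhile elaboration but not a different argument.
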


\begin{proof}
  Consider the left-to-right minima of a permutation $\pi \in \Av_n(p,r)$ as we
  did in Proposition~\ref{choose}. The points vertically between any two
  adjacent left-to-right minima must form a decreasing sequence and the points
  horizontally between any two adjacent left-to-right minima must also form a
  decreasing sequence. If we overlay these two conditions we get a structure
  like that in Figure~\ref{fig:struct-c}, where, in this case, each of the
  squares in the diagram must be decreasing. Also, the structure of the rows and
  columns will be determined as decreasing no matter which squares have points.
  Therefore, placing any number of points into the squares will create a unique
  permutation, and so the ordinary generating function for $|\Av_n(p,r)|$ is
  \[
    \sum_{k\geq0} x^k \left(\frac{1}{1-x}\right)^{\binom{k+1}{2}}.
  \]
  The coefficient of $x^j$ in $1/(1-x)^m$ is $\binom{m+j-1}{j}$ which concludes
  the proof.
\end{proof}

It is possible to show that avoiding the two patterns $p$ and $r$, above, is
equivalent to avoiding a single \emph{barred} pattern first introduced by
West~\cite{westbar}. For a more detailed account of barred patterns see
Pudwell~\cite{barpatt}. The following is the definition which can be found in
that reference.

\begin{definition}{(Pudwell~\cite[p.~1]{barpatt})}
  Let $p$ be a barred pattern. Let $r$ be the pattern with the bars removed and
  let $p'$ be the permutation order isomorphic to the pattern in which the
  barred numbers are removed. We say that a permutation contains $p$ if every
  occurrence of $p'$ can be extended to an occurrence of $r$.
\end{definition}

For example, a permutation contains $\bar{4}25\bar{1}3$ if every occurrence of
$132$ is contained as $253$ in a $42513$ pattern. Barred patterns can often be
thought of as mesh patterns (see Ulfarsson~\cite[p.~5]{west3}). For instance,
\[
  \Av_n(\bar{4}23\bar{1}5) =
  \Av_n\left(
  \mpattern{scale=\patttextscale}{3}{1/1,2/2,3/3}{2/0},
  \mpattern{scale=\patttextscale}{3}{1/1,2/2,3/3}{0/2}\right)
  =
  \Av_n\left(\mpattern{scale=\patttextscale}{3}{1/1,2/2,3/3}{2/0,2/1,2/2,2/3},
    \mpattern{scale=\patttextscale}{3}{1/1,2/2,3/3}{0/2,1/2,2/2,3/2}\right)
  \]
and
\[
  \Av_n(\bar{4}25\bar{1}3) =
  \Av_n\left(
    \mpattern{scale=\patttextscale}{3}{1/1,2/3,3/2}{2/0},
    \mpattern{scale=\patttextscale}{3}{1/1,2/3,3/2}{0/2}\right) =
  \Av_n\left(
    \mpattern{scale=\patttextscale}{3}{1/1,2/3,3/2}{2/0,2/1,2/2,2/3},
    \mpattern{scale=\patttextscale}{3}{1/1,2/3,3/2}{0/2,1/2,2/2,3/2}\right),
\]
where the second equalities in both equations follow from Lemma~\ref{Shading1}
\begin{corollary}
  The number of permutations in $\Av_n\left(\bar{4}23\bar{1}5\right)$ is
  \[
    \sum_{k = 0}^n \binom{ \binom{k + 1}{2} + n - k - 1}{n - k}.
  \]
\end{corollary}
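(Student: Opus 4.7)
The plan is essentially to chain together two results already established in the excerpt: the translation of the barred pattern $\bar{4}23\bar{1}5$ into a pair of mesh patterns (equivalently, a pair of vincular/covincular patterns), and the enumeration obtained in Proposition~\ref{barred1}.

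First I would make the barred-to-mesh translation explicit. By definition, a permutation contains $\bar{4}23\bar{1}5$ iff every occurrence of the unbarred reduced pattern $123$ extends to an occurrence of $42513$. Equivalently, a permutation \emph{avoids} $\bar{4}23\bar{1}5$ iff every occurrence of $123$ fails to extend in at least one of the two ways dictated by the barred letters: there is no larger value immediately above the smallest point of the $123$, or no smaller value immediately below the middle point. This is exactly the mesh-pattern condition already displayed in the excerpt, namely avoidance of the pair
\[
\mpattern{scale=\patttextscale}{3}{1/1,2/2,3/3}{2/0} \quad\text{and}\quad \mpattern{scale=\patttextscale}{3}{1/1,2/2,3/3}{0/2}.
\]

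Next I would invoke the equalities the paper has already recorded. The second displayed equation in the section states
\[
\Av_n(\bar{4}23\bar{1}5) = \Av_n\left(\mpattern{scale=\patttextscale}{3}{1/1,2/2,3/3}{2/0}, \mpattern{scale=\patttextscale}{3}{1/1,2/2,3/3}{0/2}\right) = \Av_n\left(\mpattern{scale=\patttextscale}{3}{1/1,2/2,3/3}{2/0,2/1,2/2,2/3}, \mpattern{scale=\patttextscale}{3}{1/1,2/2,3/3}{0/2,1/2,2/2,3/2}\right),
\]
where the right-hand equality uses Lemma~\ref{Shading1}\ref{case-ii} (and its symmetric counterpart applied to the second pattern). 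The rightmost set is precisely the set counted in Proposition~\ref{barred1}, whose enumeration is
\[
\sum_{k=0}^n \binom{\binom{k+1}{2} + n - k - 1}{n - k}.
\]

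Since neither the barred-to-mesh dictionary nor the shading equalities involve any genuinely new combinatorics beyond what the paper has already proved, there is essentially no obstacle; the only thing to be careful about is confirming that the barred-pattern definition really is equivalent to the stated pair of mesh patterns (the single-box shadings capture ``no element immediately above'' and ``no element immediately below,'' which is precisely the effect of barring the smallest and the largest letters of $42513$). Once that sanity check is made, the corollary follows immediately from Proposition~\ref{barred1}.
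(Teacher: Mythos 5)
Your argument is correct and is essentially the paper's own proof: the displayed identity $\Av_n(\bar{4}23\bar{1}5)=\Av_n\bigl((123,\{2\},\emptyset),(123,\emptyset,\{2\})\bigr)$ combined with Proposition~\ref{barred1} is exactly how the corollary is obtained there. One small citation slip: the single-box-to-full-line shading step for the underlying permutation $123$ comes from Lemma~\ref{Shading1}\ref{case-iii} applied up to the symmetries of the square (reverse--complement for the box $(2,0)$ and inverse for the box $(0,2)$), not from case~\ref{case-ii}, which concerns the underlying permutation $132$.
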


This confirms the conjecture from Pudwell \cite[page 8]{barpatt} that
$\bar{4}25\bar{1}3$ and $\bar{4}23\bar{1}5$ are Wilf-equivalent, i.e.,~that the
number of permutations avoiding either one is the same. If we were to apply the
same method as in the proof of Proposition~\ref{barred1} to
\[
\Av_n\left(
  \mpattern{scale=\patttextscale}{3}{1/1,2/3,3/2}{2/0,2/1,2/2,2/3},
  \mpattern{scale=\patttextscale}{3}{1/1,2/3,3/2}{0/2,1/2,2/2,3/2}\right)
= \Av_n(\bar{4}25\bar{1}3)
\]
then we would have a similar structure with the left-to-right minima, where,
however, we get increasing sequences in the squares and along the rows
and columns.

\section{Motzkin numbers (\texorpdfstring{\seqnum{A001006}}{A001006})}\label{motzkinsection}

The Motzkin numbers, $M_n$, form a well known sequence which can be
defined by a functional equation their ordinary generating function
satisfies:
\[
M = 1 + xM + x^2M^2\quad\text{where}\quad M=\sum_{n\geq 0} M_n x^n.
\]
For more information on Motzkin numbers see
e.g.,~OEIS~\cite{actualmotzkin}.  After considering symmetries and Lemma
\ref{Shading1} we have two cases such that $|\Av_n(p,r)|=M_n$ and
$(p,r)\in\PQ$. For a full list see Table~\ref{big-table} of the
Appendix.

\begin{proposition}[Elizalde and Mansour~\cite{motzkinbijection}]\label{motzkingen}
  Let $p = \mpattern{scale=\patttextscale}{3}{1/1,2/3,3/2}{}$ and $r =
  \mpattern{scale=\patttextscale}{3}{1/1,2/2,3/3}{0/2,1/2,2/2,3/2}$. The number
  of permations in
  \[
    \Av_n(p,r) \quad\text{is}\quad M_n.
  \]
\end{proposition}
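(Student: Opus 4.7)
The plan is to establish the Motzkin recurrence
\[
  |\Av_n(p,r)| = |\Av_{n-1}(p,r)| + \sum_{j=0}^{n-2} |\Av_j(p,r)|\cdot|\Av_{n-2-j}(p,r)|
\]
by decomposing a permutation in $\Av_n(p,r)$ at the position of its maximum entry, and then to conclude by induction on $n$ from the trivial base cases $|\Av_0(p,r)|=|\Av_1(p,r)|=1$.

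As a preliminary step I would reformulate $r$-avoidance: an occurrence of $r$ consists of indices $i_1<i_2<i_3$ with $\pi(i_1)<\pi(i_2)$ and $\pi(i_3)=\pi(i_2)+1$, so $\pi$ avoids $r$ if and only if for every $b<n$ such that $b$ appears to the left of $b+1$ in $\pi$, the entry $b$ is a left-to-right minimum of $\pi$.

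For the main decomposition, let $\pi(k)=n$. Classical $132$-avoidance forces the values at positions $1,\ldots,k-1$ to be precisely $\{n-k+1,\ldots,n-1\}$ (in some $132$-avoiding order) and those at positions $k+1,\ldots,n$ to be $\{1,\ldots,n-k\}$. When $k=1$, the entry $n$ at the front cannot participate in any $132$ or any $r$ occurrence, so $\pi=n\pi'$ with $\pi'\in\Av_{n-1}(p,r)$, giving $|\Av_{n-1}(p,r)|$ permutations. When $k\geq 2$, the value $n-1$ lies in the left block and thus appears before $n$; the reformulation then forces $n-1$ to be a left-to-right minimum, but no entry in the left block exceeds $n-1$, so $\pi(1)=n-1$ is pinned down. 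After $\pi(1)$ and $\pi(k)$ are fixed, the standardizations $\alpha$ and $\beta$ of the remaining entries at positions $2,\ldots,k-1$ and $k+1,\ldots,n$ should range independently over $\Av_{k-2}(p,r)\times\Av_{n-k}(p,r)$. Summing over $k$ and reindexing by $j=k-2$ yields the Motzkin recurrence.

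The main obstacle is the bookkeeping behind the independence claim in the $k\geq 2$ case: one must check that no $132$ or $r$ occurrence can straddle the two standardized blocks or can involve the distinguished entries $\pi(1)=n-1$ or $\pi(k)=n$. Cross-block patterns are killed because left-block values strictly dominate right-block values while still lying to the left, so an ascent cannot reach from one block to the other. The distinguished entries are severely restricted as well: $n$ can only play the role of the largest entry in either pattern, and $n-1$ at position $1$ has no preceding positions available, so a role-by-role case analysis rules each of them out of any $132$ or $r$ occurrence. These checks are routine but cover several subcases, and assembling them carefully is the technical core of the argument.
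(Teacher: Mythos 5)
Your argument is correct, and it takes a genuinely different route from the paper's. The paper decomposes at the rightmost entry: avoiding $132$ forces $\pi=\alpha\,\sigma\,\pi(n)$ with all values above $\pi(n)$ in $\alpha$ and all values below in $\sigma$, and then $r$ forces the maximum $\pi(n)-1$ of $\sigma$ to be the first letter of $\sigma$ (else that letter, a smaller letter of $\sigma$ preceding it, and $\pi(n)$ form an occurrence of $r$), yielding the functional equation $A=1+xA+x^2A^2$. You decompose at the maximum value instead: either $n$ is the first letter, or your reformulation of $r$-avoidance (every $b$ lying to the left of $b+1$ must be a left-to-right minimum) pins $n-1$ to position $1$, yielding the convolution form $M_n=M_{n-1}+\sum_{j}M_jM_{n-2-j}$ of the same recurrence. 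These really are distinct decompositions, not images of one another under a symmetry of the pair, since the inverse of $r$ is vincular rather than covincular. Your explicit left-to-right-minimum characterization of $r$-avoidance is a clean lemma the paper never isolates, and it makes the forcing of $n-1$ a one-line step; the paper's version keeps both distinguished points at block boundaries already supplied by the $132$-decomposition, so its verification is marginally lighter. The independence checks you defer do go through as sketched: every cross-block pair of entries is a descent while both $132$ and $123$ require ascents in the relevant positions, and since each block occupies an interval of values, value-adjacency (the only constraint $r$ adds to $123$) is preserved under standardization --- worth stating explicitly in a final write-up, but not a gap.
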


The proof given by Elizalde and Mansour~\cite{motzkinbijection} provides a
bijection between $\Av_n(p,r)$ and Motzkin paths. We show that the structure of
the permutations implies they are enumerated by the Motzkin numbers.

\begin{proof}
  Consider a permutation $\pi$ in $\mcA = \Av(p,r)$. Further, consider
  the rightmost point of $\pi$. For $\pi$ to avoid $p$ the structure of $\pi$
  must be like Figure~\ref{fig:Motzin-struct-a}.
  \begin{figure}[ht]
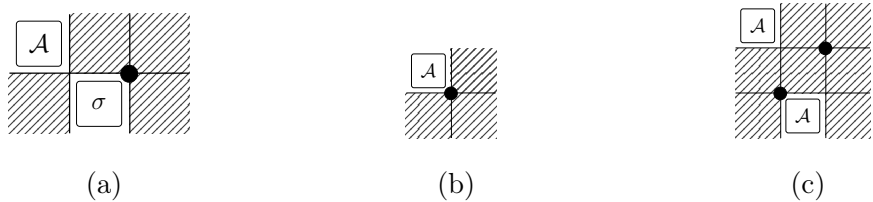

    \centering
    \mbox{
    \begin{subfigure}[b]{0.3\textwidth}
      \centering
      \metapatt{0.8}{2/1}{1/v,2/v,1/h}{}{2/1/p}
      [0/0,2/0,1/1,2/1]
      [0/1/1/2/$\mcA$,1/0/2/1/$\sigma$]
      \caption{}\label{fig:Motzin-struct-a}
    \end{subfigure}
    \begin{subfigure}[b]{0.25\textwidth}
      \centering
      \metapatt{0.6}{1/1}{1/v,1/h}{}{1/1/p}[0/0,1/0,1/1,][0/1/1/2/$\mcA$]
      \caption{}\label{fig:Motzin-struct-b}
    \end{subfigure}
    \begin{subfigure}[b]{0.3\textwidth}
      \centering
      \metapatt{0.6}{2/2}{1/v,2/v,1/h,2/h}{}{1/1/p,2/2/p}[0/0,0/1,1/1,1/2,2/0,2/1,2/2]
      [0/2/1/3/$\mcA$,1/0/2/1/$\mcA$]
      \caption{}\label{fig:Motzin-struct-c}
    \end{subfigure}
    }
    \caption{The structure of $\Av(p,r)$ from Proposition~\ref{motzkingen}}
  \end{figure}
  With regard to $\sigma$, let us consider two cases. Either $\sigma$ is empty
  or it has at least one point. If $\sigma$ is empty the structure looks like
  Figure~\ref{fig:Motzin-struct-b}.  If $\sigma$ is non-empty then consider the
  maximum point, $m$, of $\sigma$. If there was a point to the left of $m$ in
  $\sigma$ then this point together with $m$ and the rightmost point of $\pi$
  would create an occurrence of $r$. Therefore there must be no points to the
  left of $m$ in $\sigma$. Thus we can place any possibly empty smaller
  permutation in $\mcA$ to the right of the maximum of $\sigma$ without
  creating an occurrence of $p$ or $r$, and so we have the structure in
  Figure~\ref{fig:Motzin-struct-c}.

  In conclusion, any non-empty permutation in $\mcA$ either has a
  structure described by Figure~\ref{fig:Motzin-struct-b} or a structure
  described by Figure~\ref{fig:Motzin-struct-c}. Letting $A$ denote the ordinary
  generating function for $\mcA$ we thus have $A = 1 + xA + x^2A^2$, from
  which the claim follows.
\end{proof}

We now go on to the second case. We will consider the structure of the avoiders
in terms of the left-to-right minima, as in Proposition~\ref{choose}.

\begin{proposition}\label{motzkinrecur}
  Let $p = \mpattern{scale=\patttextscale}{3}{1/1,2/2,3/3}{}$ and
  $r = \mpattern{scale=\patttextscale}{3}{1/1,2/3,3/2}{0/2,1/2,2/2,3/2}$.
  The number of permutations in
  \[
  \Av_n(p,r) \quad\text{is}\quad M_n.
  \]
\end{proposition}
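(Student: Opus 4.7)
The plan is to prove the enumeration by establishing a bijection between $\Av_n(p,r)$ and the set of Motzkin paths of length $n$. The key is a clean structural characterization of $\mcA = \Av(p,r)$ in terms of left-to-right minima (in the spirit of Proposition~\ref{choose}): I will show that $\pi \in \mcA$ if and only if $\pi$ avoids $123$ classically and every non-left-to-right minimum $m \geq 2$ of $\pi$ has $m - 1$ occurring at an earlier position, with $m - 1$ itself being a left-to-right minimum. The main observation behind this is that in any $123$-avoider, the middle-position value of any $132$ occurrence must be a non-left-to-right minimum (otherwise every earlier value would exceed it, contradicting the pattern's leftmost value being smaller), and the covincular shading at row $2$---which forbids the largest and middle values of a $132$ from being consecutive integers---is exactly what prevents $m - 1$ from lying to the right of such an $m$.

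Given the characterization, I define $\phi \colon \mcA_n \to$ (Motzkin paths of length $n$) by scanning $\pi$ from left to right and emitting, at position $i$, the letter $D$ if $\pi(i)$ is a non-left-to-right minimum, $U$ if $\pi(i)$ is a left-to-right minimum with $\pi(i)+1$ at a later position, and $H$ otherwise. The bijection $m \mapsto m-1$ between non-left-to-right minima and the ``active'' left-to-right minima (those emitting $U$) gives $\#U = \#D$, and because each $D$ at position $i$ has its matching $U$ at a strictly earlier position, every prefix has height $\geq 0$. So $\phi(\pi)$ is a genuine Motzkin path.

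To invert $\phi$, given a Motzkin path $P$ I first read off the left-to-right-minimum positions (the $U$ and $H$ positions) and non-left-to-right-minimum positions (the $D$ positions); then I determine the set $S$ of left-to-right-minimum values from the $U$/$H$ pattern. The sequence of gaps $g_i = s_i - s_{i+1}$ between successive left-to-right minima (in decreasing value order) is uniquely forced: each $H$ at the $i$-th left-to-right-minimum position (with $i \geq 2$) forces $g_{i-1} = 1$, since otherwise $s_i + 1 \notin S$ would be a non-left-to-right minimum and $s_i$ would be active; each $U$ forces $g_{i-1} = 2$, since $s_i + 1 \in T$ occupies exactly one slot above $s_i$; and the initial gap $n - s_1$ is $0$ or $1$ depending on whether the first step is $H$ or $U$. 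A direct count shows these gaps must sum to $n - 1$, and the minimum possible sum already equals $n - 1$, so equality is forced everywhere. Thus $S$---and hence $\pi$ (obtained by placing $S$ decreasingly at the left-to-right-minimum positions and $T = \{s+1 : s \in S \text{ active}\}$ decreasingly at the $D$ positions)---is recovered uniquely.

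The main obstacle is the structural characterization: carefully verifying both implications requires analyzing where the three positions of a hypothetical $132$ occurrence can lie relative to the left-to-right minima of $\pi$, and ruling out the case that $m - 1$ is itself a non-left-to-right minimum (which contradicts the decreasing order of non-left-to-right minima coming from $123$-avoidance). Once the characterization is established, both directions of the bijection $\phi$ are essentially mechanical, and the bijection immediately yields $|\mcA_n| = M_n$.
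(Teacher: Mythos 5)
Your proof is correct, but it takes a genuinely different route from the paper. The paper also starts from the left-to-right-minima structure (each cell of the staircase grid is empty or a single point, at most one point per row), but then derives the Motzkin functional equation $A = 1 + xA + x^2A^2$ by a recursive decomposition: either there is no point on the leading diagonal (remove the minimum, giving $xA$) or the leftmost diagonal point splits the avoider into two smaller avoiders (giving $x^2A^2$). You instead prove a clean arithmetic characterization --- $\pi\in\mcA$ iff $\pi$ avoids $123$ and every non-left-to-right minimum $m$ has $m-1$ occurring earlier as a left-to-right minimum --- and convert it into an explicit length-preserving bijection with Motzkin paths via the $U/H/D$ encoding of positions. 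Your characterization is valid (the forward direction follows since $a<m-1$ before $m$ with $m-1$ after $m$ would be an occurrence of $r$, and $m-1$ being a left-to-right minimum is forced by the decreasing order of non-minima in a $123$-avoider; the converse is immediate since the middle entry of any occurrence of $r$ is a non-left-to-right minimum whose predecessor value appears later), and your gap analysis correctly recovers the set of left-to-right-minimum values from the path, since the forced lower bounds on the gaps already sum to $n-1$. The one step you should spell out is surjectivity: given an arbitrary Motzkin path, you must check that the constructed permutation really has the intended left-to-right minima and lies in $\mcA_n$; this uses the prefix-nonnegativity of the path (before the $i$-th $U/H$ position there are at most as many $D$'s as $U$'s, so every element of $T$ smaller than $s_i$ is placed to its right), plus the observation that a union of two decreasing subsequences automatically avoids $123$. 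With that verification your argument is complete, and it buys something the paper's proof does not: an explicit bijection with Motzkin paths (comparable in spirit to the Elizalde--Mansour bijection cited for Proposition~\ref{motzkingen}) rather than only an enumeration via the functional equation.
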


\begin{proof}
  Let $\pi\in\Av_n(p,r)$ and consider the boundary of (the diagram of) $\pi$
  given by the left-to-right minima.  As in Proposition~\ref{choose}, any cell
  in the diagram must be both increasing and decreasing and so the cell is empty
  or contains exactly one point. Because the rows are increasing and $\pi$
  avoids $123$ there can be at most one point in each row. Moreover, if there is
  a point in a cell then we cannot place a point in a cell further to the right
  and above.

  Pick the leftmost point in the leading diagonal of this grid. The points above
  this cell will then form a subword of $\pi$ which is of shorter length and
  also avoids both patterns. The points below this cell will similarly form a
  subword which avoids both patterns; see Figure~\ref{fig:Motzkin1}.
  \begin{figure}[ht]
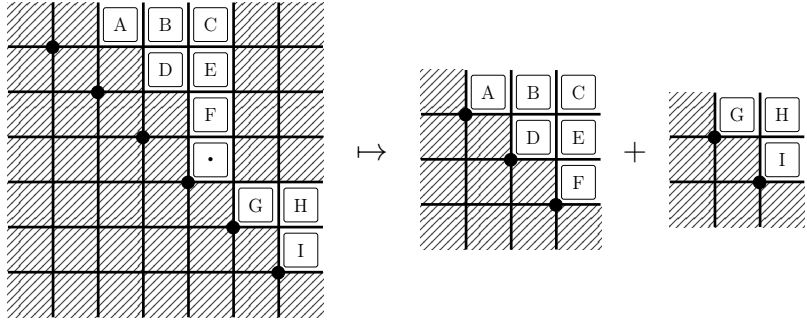

  \[
      \metapatt{0.6}{6/6}{}{1/v,2/v,3/v,4/v,5/v,6/v,1/h,2/h,3/h,4/h,5/h,6/h}
      {1/6,2/5,3/4,4/3,5/2,6/1}
      [0/0,0/1,0/2,0/3,0/4,0/5,0/6,1/0,1/1,1/2,1/3,1/4,1/5,2/0,2/1,2/2,2/3,2/4,3/0,3/1,
      3/2,3/3,4/0,4/1,4/2,5/0,5/1,6/0,6/3,6/4,6/5,6/6,5/3,5/4,5/5,5/6,3/4,2/5,1/6]
      [2/6/3/7/A,3/6/4/7/B,4/6/5/7/C,3/5/4/6/D,4/5/5/6/E,4/4/5/5/F,4/3/5/4/\Huge{$\cdot$},
      5/2/6/3/G,6/2/7/3/H,6/1/7/2/I][]
      \;\;\mapsto\;\;
      \metapatt{0.6}{3/3}{}{1/v,2/v,3/v,1/h,2/h,3/h}{1/3,2/2,3/1}
      [0/0,0/1,0/2,0/3,1/0,1/1,1/2,2/0,2/1,3/0]
      [1/3/2/4/A,2/3/3/4/B,3/3/4/4/C,2/2/3/3/D,3/2/4/3/E,3/1/4/2/F]
      \;+\;
      \metapatt{0.6}{2/2}{}{1/v,2/v,1/h,2/h}{1/2,2/1}
      [0/0,0/1,0/2,1/0,1/1,2/0][1/2/2/3/G,2/2/3/3/H,2/1/3/2/I]
  \]
  \caption{Decomposing a permutation in $\Av_n(p,r)$ from Proposition~\ref{motzkinrecur}}\label{fig:Motzkin1}
  \end{figure}
  Notice that this process is reversible: we can take a pair of avoiders of
  lengths $k$ and $n-k-2$, respectively, and glue them together by adding the
  leftmost point on the leading diagonal and the corresponding left-to-right
  minimum in this way.

  There is also the case when there are no points on the leading diagonal
  directly to the right of a left-to-right minima. In this case we remove the
  minimum point and tuck the points in the same way we did for the top half of
  the previous case, producing an avoider which is shorter in length; see
  Figure~\ref{fig:Motzkin2}.
  \begin{figure}[ht]
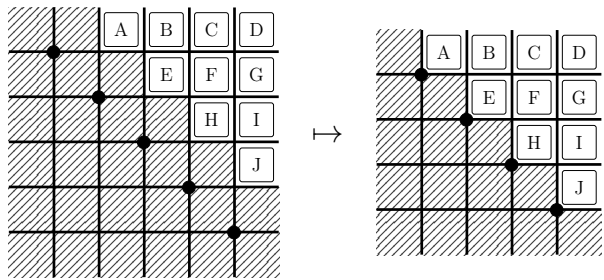

  \[
      \metapatt{0.6}{5/5}{}{1/v,2/v,3/v,4/v,5/v,1/h,2/h,3/h,4/h,5/h}
      {1/5,2/4,3/3,4/2,5/1}
      [0/0,0/1,0/2,0/3,0/4,0/5,1/0,1/1,1/2,1/3,1/4,1/5,2/0,2/1,2/2,2/3,2/4,3/0,
      3/1,3/2,3/3,4/0,4/1,4/2,5/0,5/1][2/5/3/6/A,3/5/4/6/B,4/5/5/6/C,5/5/6/6/D,
      3/4/4/5/E,4/4/5/5/F,5/4/6/5/G,4/3/5/4/H,5/3/6/4/I,5/2/6/3/J][] \;\;\mapsto\;\;
      \metapatt{0.6}{4/4}{}{1/v,2/v,3/v,4/v,1/h,2/h,3/h,4/h}{1/4,2/3,3/2,4/1}
      [0/0,0/1,0/2,0/3,0/4,1/0,1/1,1/2,1/3,2/0,2/1,2/2,3/0,3/1,4/0]
      [1/4/2/5/A,2/4/3/5/B,3/4/4/5/C,4/4/5/5/D,2/3/3/4/E,3/3/4/4/F,
      4/3/5/4/G,3/2/4/3/H,4/2/5/3/I,4/1/5/2/J][]
  \]
  \caption{``Shortening'' a permutation in $\Av_n(p,r)$ from Proposition~\ref{motzkinrecur}}\label{fig:Motzkin2}
  \end{figure}
  Again this is reversible, so we can take any length $n-1$ avoider and append a
  new minimum to create a length $n$ avoider. Thus, letting $A$ be the ordinary
  generating function for $\mcA$ we get that $A = 1 + xA + x^2A^2$.
\end{proof}

We will use a similar method to give a new proof of the enumeration of
$\Av_n(123)$ in Section~\ref{avoiding123}.

\section{Lattice paths and their area (\texorpdfstring{\seqnum{A249560}}{A249560})}\label{latticesection}
Up to symmetries there is a single pair $(p,r)$ in $\PQ$ with the enumeration
given in Proposition~\ref{lattice}, namely
\[
  \mcA_n = \Av_n\left(
  \mpattern{scale=\patttextscale}{3}{1/2,2/3,3/1}{1/0,1/1,1/2,1/3},
  \mpattern{scale=\patttextscale}{3}{1/1,2/3,3/2}{0/2,1/2,2/2,3/2}\right).
\]
To find the enumeration of this set we consider a different boundary than those
seen in previous sections. Our boundary here will be left-to-right minima and
right-to-left minima. We will first find a bijection between lattice paths and
the boundaries of permutations in $\mcA_n$. Then we extend this bijection by
considering the area under these paths.

For our purposes a \emph{lattice path} of length $n$ is a path that starts at
$(0,0)$ and has $n$ steps, each of which is
\begin{enumerate}
  \item[$N$]: $(x,y) \mapsto (x,y+1)$ or
  \item[$E$]: $(x,y) \mapsto (x+1,y)$.
\end{enumerate}
Clearly there are $2^n$ paths of length $n$. The following result is due to
Simion and Schmidt~\cite{classicsubset}, but we give a proof that is different
from theirs.

\begin{proposition}[Simion and Schmidt~\cite{classicsubset}]\label{bijection}
  There is a bijection between the length $n-1$ lattice paths and the
  permutations in $\Av_n(231,132)$.
\end{proposition}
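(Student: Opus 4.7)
The plan is to construct a recursive bijection driven by the position of the maximum entry of a permutation. First I would establish the following structural fact: for every $\pi \in \Av_n(231,132)$ with $n \geq 2$, the value $n$ lies at position $1$ or position $n$. Indeed, if instead $\pi_k = n$ with $1 < k < n$, then picking any $\pi_i$ with $i < k$ and any $\pi_j$ with $j > k$ produces a triple $\pi_i\, n\, \pi_j$ order isomorphic to $132$ when $\pi_i < \pi_j$ and to $231$ when $\pi_i > \pi_j$, contradicting avoidance in either case.

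Using this, I would define a map $\phi \colon \Av_n(231,132) \to \{N,E\}^{n-1}$ by induction on $n$. The base case $n=1$ sends the unique permutation to the empty path. For $n \geq 2$, let $\pi'$ be the permutation of $\{1,\ldots,n-1\}$ obtained by deleting the entry $n$; since $\pi'$ is a subword of $\pi$ it still lies in $\Av_{n-1}(231,132)$, so $\phi(\pi')$ is already defined. I would then set $\phi(\pi) = N \cdot \phi(\pi')$ if $\pi_1 = n$ and $\phi(\pi) = E \cdot \phi(\pi')$ if $\pi_n = n$, which is unambiguous by the structural fact.

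For the inverse I would read a lattice path $P = s_1 s_2 \cdots s_{n-1}$ left to right, recursively building the permutation associated with $s_2 \cdots s_{n-1}$ and then prepending $n$ if $s_1 = N$ or appending $n$ if $s_1 = E$. The one routine check is that inserting the new global maximum at either end of a permutation in $\Av_{n-1}(231,132)$ cannot create a fresh occurrence of $231$ or $132$: the inserted entry is the maximum, so it could only play the role of the ``$3$'' in such a pattern, and in both $132$ and $231$ that position has letters on both sides, impossible at an end of the word. By induction the two maps are mutually inverse. The argument poses no real obstacle; the only delicate points are verifying that the dichotomy $\pi_1 = n$ versus $\pi_n = n$ is genuinely exclusive when $n \geq 2$ and that the base case is handled correctly.
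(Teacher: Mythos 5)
Your proof is correct, but it takes a different route from the paper's. The paper defines the path globally in one shot: it sets $x_k = N$ or $E$ according to whether the value $k$ lies to the left or to the right of the minimum entry $1$, and justifies invertibility by the structural observation that in any $\pi\in\Av_n(231,132)$ the entries left of the minimum form a decreasing sequence and those to its right form an increasing sequence, so $\pi$ is recovered from the set $\{k : x_k = N\}$. You instead peel off the maximum recursively, using the complementary observation that the maximum must sit at one end (since the largest value occupies the middle position in both $132$ and $231$), and you verify the inverse by an insertion argument. Amusingly, the two constructions produce the very same bijection (deleting $n$ does not change which side of $1$ any smaller value lies on), so your map agrees with theirs letter for letter. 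What the paper's version buys is an explicit global description of the avoiders --- a decreasing block, then $1$, then an increasing block --- which it reuses later when analysing boundaries in Proposition~\ref{lattice}; what your version buys is a cleaner, fully self-contained verification that the inverse lands in $\Av_n(231,132)$ (the paper leaves the surjectivity check implicit). Both arguments are sound.
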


\begin{proof}
  For $\pi \in \Av_n(231,132)$ define the path $w=x_nx_{n-1} \cdots x_2$ by
  \begin{equation*}
    x_k =
    \begin{cases}
      N & \text{if } \pi^{-1}(k) < \pi^{-1}(1) \text{ i.e.,~$k$ appears to the left of $1$},\\
      E & \text{otherwise}.
    \end{cases}
  \end{equation*}
  To see that $\pi\mapsto w$ is invertible note that the points to the left of
  the minimum of $\pi$ form a decreasing sequence, and, similarly, the points to
  the right of the minimum form an increasing sequence. Thus, any permutation
  $\pi\in\Av_n(231,132)$ is uniquely specified by the set $\{ i : \pi^{-1}(i) <
  \pi^{-1}(1)\}$ which coincides with the set $\{ i : x_i = N\}$.
\end{proof}

For example,
\[
  \pi = 975431268 =
  \mpattern{scale=\pattdispscale}{9}{1/9,2/7,3/5,4/4,5/3,6/1,7/2,8/6,9/8}{}\mapsto
  NENENNNE =
  \raisebox{-3.6em}{
    \begin{tikzpicture}[scale=0.5]
        \draw [<->,thick] (0,5.5) node (yaxis) [above] {$y$}
          |- (3.5,0) node (xaxis) [right] {$x$};
      \draw (0,0) [insert lattice path={ururuuur}];
    \end{tikzpicture}}
\]

Every lattice path defines an area enclosed by the path and the $x$-axis. We use
$q$-binomials to record this, see e.g.,~Azose~\cite{qbinomial} for more
details. In terms of $q$-binomial coefficients the number of length $n$ paths,
with $m$ $E$-steps, is given by ${n \brack m}_q$ where the coefficient of $q^k$
is the number of paths with area $k$. Let
\[
  L_n(q) = \sum_{m = 0}^n {n \brack m}_q,
\]
which is the distribution of area over all length $n$ paths. We will now
link this to pattern avoidance.

\begin{proposition}\label{lattice}
  Let $p = \mpattern{scale=\patttextscale}{3}{1/2,2/3,3/1}{1/0,1/1,1/2,1/3}$ and
  $r = \mpattern{scale=\patttextscale}{3}{1/1,2/3,3/2}{0/2,1/2,2/2,3/2}$.
  The ordinary generating function for
  \[
  \Av_n(p,r) \quad\text{is}\quad 1+ \sum_{n \geq 0} x^{n+1} L_n(1+x).
  \]
\end{proposition}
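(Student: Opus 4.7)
The plan is to analyze each $\pi\in\Av_n(p,r)$ through its \emph{boundary}, defined here as the union of the left-to-right minima and the right-to-left minima of $\pi$, and then to show that the remaining (non-boundary) points correspond to a chosen subset of the area cells under the lattice path associated to the boundary via Proposition~\ref{bijection}. First I would establish the rigid shape of the boundary. Avoiding $p$ forces the initial segment of $\pi$, from position $1$ up to the position of the global minimum, to be strictly decreasing: any ascent $\pi(i)<\pi(i+1)$ before the $1$ would, together with $1$, form a forbidden occurrence of $p$. Once this is known, $r$-avoidance imposes a tail constraint: whenever two consecutive values $v,v+1$ both lie in the tail, $v$ must precede $v+1$, because otherwise the $1$ sitting in the prefix supplies the $i_1$ needed for a forbidden occurrence of $r$.

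These two structural facts together force the boundary to be a V-shape (decreasing along the left-to-right minima down to $1$, then increasing along the right-to-left minima), i.e., order isomorphic to an element of $\Av(231,132)$. By Proposition~\ref{bijection}, a boundary of size $k$ corresponds bijectively to a lattice path of length $k-1$, in which the $N$-steps label the left-to-right minima (values appearing to the left of $1$) and the $E$-steps label the right-to-left minima (values appearing to its right).

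Next I would study the non-boundary points. Drawing the V as a staircase in the $n\times n$ grid and extending grid lines through the boundary points partitions the region strictly above the V into unit cells, one for every pair $(a,b)$ with $a$ a left-to-right minimum value, $b$ a right-to-left minimum value, and $a>b$. These pairs are exactly the $(N,E)$-inversions of the associated lattice path, whose count is the area beneath the path. The main obstacle is showing that (i) each such cell can contain at most one non-boundary point---two points in a single cell would force either an ascent violating $p$ or a consecutive-value pair violating $r$---and (ii) any subset of cells can be filled independently, with the resulting permutation still lying in $\Av_n(p,r)$. Verifying (ii) in particular requires tracking how inserting a point into a given cell reshuffles positions and values while keeping both forbidden patterns absent.

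Putting the pieces together, each V-shape boundary of size $k$ contributes a factor $x^k$, while its non-boundary fillings over an area-$A$ cell-set contribute $(1+x)^{A}$; summing $(1+x)^{A}$ over all length-$(k-1)$ lattice paths yields exactly $L_{k-1}(1+x)$. Hence the total contribution from boundaries of size $k$ is $x^k L_{k-1}(1+x)$. Adding $1$ for the empty permutation and reindexing $n=k-1$ produces the claimed ordinary generating function $1+\sum_{n\geq 0} x^{n+1} L_n(1+x)$.
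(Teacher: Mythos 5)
Your plan follows essentially the same route as the paper's proof: decompose each avoider into its left-to-right-minima/right-to-left-minima boundary (a $V$-shape mapped to a lattice path via Proposition~\ref{bijection}), identify the fillable unit cells with the area under that path, and assemble the generating function as $1+\sum_{n\ge 0}x^{n+1}L_n(1+x)$. The two verifications you defer --- at most one point per cell, and independence of the fillings --- are exactly the points the paper also settles only with brief assertions (columns between right-to-left minima and rows above left-to-right minima are monotone in opposite senses), and the one-line justifications you sketch for them are the correct ones.
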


\begin{proof}
  Taking the left-to-right minima and right-to-left minima boundary of any
  permutation produces a permutation in the set $\Av_n(231,132)$. Consider a
  particular boundary of right-to-left minima and left-to-right minima of a
  permutation avoiding $p$ and $r$.  To avoid $p$ the points to the left of the
  minimum form a decreasing sequence and hence there are no other points in this
  region; also, the columns in between the right-to-left minima are forced to be
  increasing.  To avoid $r$, the rows in between the right-to-left minima must
  be decreasing, and the rows directly above a left-to-right minimum must be
  empty.  As an example, for the boundary given by $\pi = 975431268$ we get the
  following restrictions.
  \[
  \mpattern{scale=\pattdispscale}{9}{1/9,2/7,3/5,4/4,5/3,6/1,7/2,8/6,9/8}
  {0/0,0/1,0/2,0/3,0/4,0/5,0/6,0/7,0/8,0/9,1/0,1/1,1/2,1/3,1/4,1/5,1/6,1/7,1/8,
   1/9,2/0,2/1,2/2,2/3,2/4,2/5,2/6,2/7,2/8,2/9,3/0,3/1,3/2,3/3,3/4,3/5,3/6,3/7,
   3/8,3/9,4/0,4/1,4/2,4/3,4/4,4/5,4/6,4/7,4/8,4/9,5/0,5/1,5/2,5/3,5/4,5/5,5/6,
   5/7,5/8,5/9,6/0,6/1,6/2,6/6,6/8,7/0,7/1,7/2,7/3,7/4,7/5,7/6,7/8,8/0,8/1,8/2,
   8/3,8/4,8/5,8/6,8/7,8/8,9/0,9/1,9/2,9/3,9/4,9/5,9/6,9/7,9/8,9/9}
  \]
  In each of the unshaded squares we can place either a single point or leave it
  empty, and each such choice will create a unique permutation. The number of
  unshaded squares is given by the area under the lattice path corresponding to
  the boundary as in Proposition~\ref{bijection}. For example, in $\pi =
  975431268$ the three unshaded boxes in the top row correspond to the three
  squares in the bottom row of the area under its corresponding lattice path.
  This is because there are three left-to-right minima (excluding $1$), which
  ensures three $E$ steps in the lattice path, and $9$ appears at the beginning
  of $\pi$ ensuring a $N$ step at the start of the path.

  Hence, to count permutations in $\Av(p,r)$ we first fix the size of boundary,
  say $n+1$, giving the factor $x^{n+1}$. Then we substitute $q = 1 + x$ into
  $L_n(q)$, since this is the generating function for all length $n+1$
  boundaries with $q$ marking the squares that we can place a point in or leave
  empty.

\end{proof}

\section{Partitions into distinct parts (\texorpdfstring{\seqnum{A249561}}{A249561})}\label{partitionsection}
Up to symmetries there is a single pair $(p,r)$ in $\PQ$ with the enumeration
given in Proposition~\ref{partition}, namely
\[
\mcA = \Av(p,r) = \Av\left(
  \mpattern{scale=\patttextscale}{3}{1/1,2/2,3/3}{1/0,1/1,1/2,1/3},
  \mpattern{scale=\patttextscale}{3}{1/2,2/3,3/1}{0/2,1/2,2/2,3/2}\right).
\]
To find the enumeration of this set we consider the boundary of a permutation
$\pi \in \mcA$ given by its right-to-left maxima and right-to-left minima.
Taking this boundary of any permutation will result in a permutation that avoids
$231$ and $213$. Since avoiding $231$ implies avoiding $r$ by
Remark~\ref{subpattern} it is clear that any such boundary for a permutation in
$\mcA$ is in the set
\[
\mcA' = \Av \left(\mpattern{scale=\patttextscale}{3}{1/1,2/2,3/3}{1/0,1/1,1/2,1/3},
  \mpattern{scale=\patttextscale}{3}{1/2,2/3,3/1}{},
  \mpattern{scale=\patttextscale}{3}{1/2,2/1,3/3}{}\right).
\]
If we take $\pi' \in \mcA'$ and consider the restrictions implied by $p$ and
$r$, we see that the number of right-to-left maxima between the two rightmost
right-to-left minima does not change the number of unshaded squares (see
Figure~\ref{arbdec}).

\begin{figure}[ht]
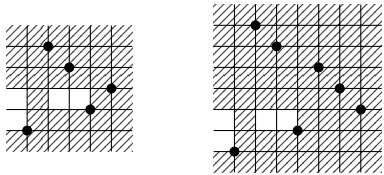

\centering
\mpattern{scale=\pattdispscale}{5}{1/1,2/5,3/4,4/2,5/3}{
0/0,0/3,0/4,0/5,
1/0,1/1,1/2,1/3,1/4,1/5,
2/0,2/1,2/3,2/4,2/5 ,
3/0,3/1,3/3,3/4,3/5,
4/0,4/1,4/2,4/3,4/4,4/5,
5/0,5/1,5/2,5/3,5/4,5/5}\phantom{---}
\mpattern{scale=\pattdispscale}{7}{1/1,2/7,3/6,4/2,5/5,6/4,7/3}{
0/0,0/3,0/4,0/5,0/6,0/7,
1/0,1/1,1/2,1/3,1/4,1/5,1/6,1/7,
2/0,2/1,2/3,2/4,2/5,2/6,2/7,
3/0,3/1,3/3,3/4,3/5,3/6,3/7,
4/0,4/1,4/2,4/3,4/4,4/5,4/6,4/7,
5/0,5/1,5/2,5/3,5/4,5/5,5/6,5/7,
6/0,6/1,6/2,6/3,6/4,6/5,6/6,6/7,
7/0,7/1,7/2,7/3,7/4,7/5,7/6,7/7}

\caption{The boundaries given by $15423$ and $1762543$}
\label{arbdec}
\end{figure}
We therefore start by considering $\pi' \in \mcA'$, where $\pi'(n-1) = \pi'(n) -
1$, i.e.~with a single right-to-left maximum after the rightmost left-to-right
minimum. In terms of pattern avoidance these boundaries are given by the set
\[
\mcB_n = \Av_n\left(
  \mpattern{scale=\patttextscale}{3}{1/1,2/2,3/3}{1/0,1/1,1/2,1/3},
  \mpattern{scale=\patttextscale}{3}{1/2,2/3,3/1}{},
  \mpattern{scale=\patttextscale}{3}{1/2,2/1,3/3}{},
  \mpattern{scale=\patttextscale}{2}{1/2,2/1}{1/0,1/1,1/2,2/0,2/1,2/2}\right)
  \subseteq \mcA_n'.
\]
Here the last pattern ensures that the condition $\pi'(n-1) = \pi'(n)-1$ is
enforced.

We will now show that the permutations in $\mcB_n$ are in bijection with
a subset of lattice paths.

\begin{definition}
  Let $w = x_1x_2 \cdots x_n$ be a lattice path. We say $w$ is a
  \emph{restricted lattice path} if
  \begin{enumerate}[(i)]
  \item  $x_1 = N$,
  \item  $x_n = E$ and
  \item  for all $i \in \{1, \ldots, n-1\}$ we have $x_ix_{i+1} \neq EE$.
  \end{enumerate}
  We define $\mathcal{R}_n$ to be the set of all restricted lattice paths of
  length $n$.
\end{definition}

\begin{remark}\label{rem:resttodist}
  A restricted lattice path, $w$, represents a unique integer partition since
  $w$ starts with an $N$ step and ends with an $E$ step. As there are no two
  consecutive $E$ steps in $w$ we can never have two columns of the same height,
  so it corresponds to an integer partition with distinct parts.
\end{remark}

\begin{proposition}\label{bijectionpart}
  There is a bijection between the restricted lattice paths in
  $\mathcal{R}_n$ and the permutations in $\mcB_n$.
\end{proposition}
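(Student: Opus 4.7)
The plan is to exhibit an explicit bijection $\phi : \mcB_n \to \mathcal{R}_n$ by recursive decomposition, matching the first one or two letters of $\pi$ against the first one or two steps of the path. The key preliminary step is a structural lemma: for $n \geq 2$, every $\pi \in \mcB_n$ falls into exactly one of two cases, namely (A) $\pi(1) = n$, or (B) $\pi(1) = 1$ and $\pi(2) = n$. To prove this I would do a short case analysis on the relative order of $\pi(1)$ and $\pi(2)$. If $\pi(1) > \pi(2)$, then any $j \geq 3$ with $\pi(j) > \pi(1)$ would yield a classical $213$ at positions $1,2,j$, so $\pi(1)$ exceeds every later value and $\pi(1) = n$. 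If $\pi(1) < \pi(2)$, the adjacent ascent at positions $1,2$ means any $j \geq 3$ with $\pi(j) > \pi(2)$ produces an occurrence of the vincular pattern $(123,\{1\},\emptyset)$; hence $\pi(2) = n$. With $\pi(2) = n$ fixed, the triple $\pi(1), n, \pi(j)$ is a classical $231$ whenever $\pi(j) < \pi(1)$, so $\pi(1)$ must be the minimum, forcing $\pi(1) = 1$.

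Next I would define $\phi$ recursively with base case $\phi(12) = NE$. In Case A, write $\pi = n \cdot \sigma$ and set $\phi(\pi) = N \cdot \phi(\sigma)$, where $\sigma$ is the length $n-1$ permutation $\pi(2) \cdots \pi(n) \in \mcB_{n-1}$. In Case B, write $\pi = 1, n, \pi''$ and set $\phi(\pi) = NE \cdot \phi(\tau)$, where $\tau \in \mcB_{n-2}$ is the standardization of $\pi''$. Two routine inheritance checks follow: first, that $\sigma \in \mcB_{n-1}$ and $\tau \in \mcB_{n-2}$, since removing the initial letters preserves classical pattern avoidance, preserves the vincular $(123,\{1\})$ condition (any adjacent-pair obstruction surviving in the smaller permutation would already be an obstruction in the larger one), and preserves the consecutive-ascent-at-end condition because the removed letters are at the beginning; second, that prepending $N$ or $NE$ to a restricted lattice path produces another restricted lattice path, the only concern being the creation of an $EE$, which never happens because the inductive image always begins with $N$. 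The inverse is immediate: a path in $\mathcal{R}_n$ begins with $NN$ or $NE$ (it cannot begin $NE$ followed by $E$ by the no-$EE$ rule), so stripping the correct prefix recovers a restricted path of length $n-1$ or $n-2$.

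The hardest step is the structural lemma, which requires juggling all three pattern conditions at once and noticing that it is precisely the vincular $(123,\{1\})$ constraint that forces $\pi(2) = n$ in Case B. A minor subtlety arises at $n = 3$: Case B would require $\tau \in \mcB_1$ with $\tau(1)+1 = n+1 = 4$, which is impossible for a length-$1$ permutation, so only Case A contributes and one obtains $|\mcB_3| = 1 = |\mathcal{R}_3|$. For $n \geq 4$ both cases contribute and the decomposition yields $|\mcB_n| = |\mcB_{n-1}| + |\mcB_{n-2}|$, matching the analogous recurrence for $\mathcal{R}_n$, so the claimed bijection follows by induction.
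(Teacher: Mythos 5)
Your proof is correct, but it takes a genuinely different route from the paper. The paper writes down an explicit global encoding: prepend an $N$ and then record, for each remaining value of $\pi$, an $N$ or an $E$ according to which side of the maximum it lies on; the three defining properties of a restricted lattice path are then verified directly, with the no-$EE$ condition established by a contrapositive argument that an occurrence of $EE$ would force an occurrence of $(123,\{1\},\emptyset)$ or of $213$ (using the shaded column of $p$). You instead prove a structural lemma --- for $n\geq 2$ every $\pi\in\mcB_n$ has either $\pi(1)=n$, or $\pi(1)=1$ and $\pi(2)=n$ --- and build the bijection recursively, matching the prefix $N$ or $NE$ of the path against the first one or two letters of the permutation; your case analysis for the lemma is sound, and the two cases do line up with the decomposition of $\mathcal{R}_n$ by its second step, giving the Fibonacci recurrence on both sides. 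The trade-off is worth noting: the paper's explicit map is what Remark~\ref{points} and Proposition~\ref{partition} actually consume (the identification of the parts of the partition with the cells of the diagram, and of the area with the number of available squares), and that statistic-preserving information is immediate from the global encoding but would have to be extracted by unwinding your recursion.

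Two loose ends you should tighten. First, your ``routine inheritance checks'' only go one way: to get the equality $|\mcB_n|=|\mcB_{n-1}|+|\mcB_{n-2}|$ (and surjectivity of $\phi$) you also need that prepending $n$ to any $\sigma\in\mcB_{n-1}$, or prepending $1,n$ to (the value-shift of) any $\tau\in\mcB_{n-2}$, lands back in $\mcB_n$; this is true --- the new letters are extremal and initial, so they cannot participate in an occurrence of $213$, $231$, or $(123,\{1\},\emptyset)$, and the final ascent is untouched --- but it must be said. Second, your explanation of why Case B is empty at $n=3$ is garbled: the correct reason is that the reconstruction from $\tau\in\mcB_1$ produces $132$, which fails the final-ascent condition, so $\mcB_1$ does not feed into $\mcB_3$ (consistently with $\mathcal{R}_1=\emptyset$). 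Your conclusion $|\mcB_3|=1=|\mathcal{R}_3|$ is nonetheless correct.
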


\begin{proof}
  Let $\pi \in \mcB_n$. Define the path $w = Nx_1x_2 \cdots x_{n-1}$ by
  \begin{equation*}
    x_k =
    \begin{cases}
      N & \text{if } \pi^{-1}(k) > \pi^{-1}(n),\\
      E & \text{if } \pi^{-1}(k) < \pi^{-1}(n).
    \end{cases}
  \end{equation*}
  By definition the path $w$ starts with an $N$ step. Also, $\pi$ ends
  with an ascent, and so $x_{n-1} = E$. That $w$ does not contain $EE$
  can be seen by contraposition: Assume that $x_ix_{i+1}=EE$, then
  $\pi^{-1}(i)<\pi^{-1}(n)$ and $\pi^{-1}(i+1)<\pi^{-1}(n)$ which means
  that $\pi$ either contains the subsequence $(i,i+1,n)$ or it contains
  the subsequence $(i+1,i,n)$. In the latter case we have an occurrence
  of $213$ and we are done, so assume the former. If $i$ and $i+1$ are
  adjacent in $\pi$ then we have an occurrence of $p$. If not, then
  there must be a point in one of the lower three squares of the shading
  of $p$. But each of these three options leads to an occurrence of
  $p$ or $213$. This shows that the range of the mapping $\pi\mapsto w$
  is contained in $\mathcal{R}_n$. To see that $\pi\mapsto w$ is
  invertible we can reason in a way that is similar to the proof of
  Proposition~\ref{bijection}.
\end{proof}

\begin{remark}\label{points}
  Let $\lambda$ be the integer partition obtained from applying the
  above bijection to the permutation $\pi$. By
  Remark~\ref{rem:resttodist} it is clear that $\lambda$ has distinct
  parts. The number of points greater than $\pi(n)$ is one less than the
  maximum part of $\lambda$ and the number of points less than $\pi(n)$
  is the number of parts of $\lambda$. See Figure~\ref{fig:distinct} for
  an example.
\end{remark}

\begin{figure}[ht]
  \centering
  \mpattern{scale=\pattdispscale}{9}{1/9,2/1,3/8,4/2,5/7,6/6,7/5,8/3,9/4}{
    0/0,0/4,0/5,0/6,0/7,0/8,0/9,
    1/0,1/4,1/5,1/6,1/7,1/8,1/9,
    2/0,2/1,2/2,2/3,2/4,2/5,2/6,2/7,2/8,2/9,
    3/0,3/1,3/4,3/5,3/6,3/7,3/8,3/9,
    4/0,4/1,4/2,4/3,4/4,4/5,4/6,4/7,4/8,4/9,
    5/0,5/1,5/2,5/4,5/5,5/6,5/7,5/8,5/9,
    6/0,6/1,6/2,6/4,6/5,6/6,6/7,6/8,6/9,
    7/0,7/1,7/2,7/4,7/5,7/6,7/7,7/8,7/9,
    8/0,8/1,8/2,8/3,8/4,8/5,8/6,8/7,8/8,8/9,
    9/0,9/1,9/2,9/3,9/4,9/5,9/6,9/7,9/8,9/9}
  \;\;$\leftrightarrow$\;\;
  \raisebox{-4.25em}{
    \begin{tikzpicture}[scale = 0.45]
        \draw [<->,thick] (0,6.5) node (yaxis) [above] {$y$}
          |- (3.5,0) node (xaxis) [right] {$x$};
      \draw (0,0) [insert lattice path={uururuuur}];
    \end{tikzpicture}}
  \;\;$\leftrightarrow$\;\;
  $2 + 3 + 6$
  \caption{The boundary given by the permutation $918276534$ with the
    corresponding lattice path and integer partition with distinct
    parts}\label{fig:distinct}
\end{figure}

Partitions are well studied objects (see e.g.,~Andrews~\cite{partition}) and it
can be shown that if $q$ keeps track of the sum of the parts then the number of
partitions with maximum part $n$ into $k$ distinct parts is given by
\[
  L_{n,k}(q) = q^{n+\binom{k}{2}} {n-1 \brack k-1}_q.
\]

\begin{proposition} \label{partition}
  Let $p = \mpattern{scale=\patttextscale}{3}{1/1,2/2,3/3}{1/0,1/1,1/2,1/3}$ and
  $r = \mpattern{scale=\patttextscale}{3}{1/2,2/3,3/1}{0/2,1/2,2/2,3/2}$.
  The ordinary generating function for
  \[
    \mcA = \Av(p,r) \quad\text{is}\quad
    1+\frac{x}{1-x}\sum_{n \geq 0}\sum_{k=0}^{n+1}x^{i+k}L_{n+1,k}\left(\frac{1}{1-x}\right).
  \]
\end{proposition}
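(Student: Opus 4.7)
The plan is to extend the bijection of Proposition~\ref{bijectionpart} to enumerate all of $\mcA$ by writing each $\pi \in \mcA$ uniquely as a pair consisting of an $\mcA'$-boundary (right-to-left maxima and right-to-left minima) together with a filling of the unshaded cells determined by that boundary. The generating function will then factor as a sum over boundaries times the generating function for the filling.

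First I would analyze a boundary $\pi' \in \mcB_n$: the right-to-left minima and right-to-left maxima partition the diagram into rectangles, and I would use $p$ and $r$ to shade every rectangle that is forced to be empty. The expected outcome (cf.~Figure~\ref{fig:distinct}) is that the unshaded rectangles are in natural bijection with the cells of the Young diagram of the distinct-parts partition $\lambda$ attached to $\pi'$ by Proposition~\ref{bijectionpart}, so that their number equals $|\lambda|$. Next I would determine how an unshaded cell may be filled. The restrictions from $p$ and $r$ should force each such cell to be simultaneously row-decreasing and column-decreasing, so it admits an arbitrary (possibly empty) decreasing sequence, contributing a factor $\frac{1}{1-x}$ per cell and $\left(\frac{1}{1-x}\right)^{|\lambda|}$ per boundary. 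Together with the weight $x^{n+1+k}$ that records the boundary length (with $m = n+1$ the max part and $k$ the number of parts, so that the boundary has $m+k$ points), this gives the $\mcB$-contribution $\sum_{n, k} x^{n+1+k}\, L_{n+1,k}\!\left(\tfrac{1}{1-x}\right)$, where the substitution $q = 1/(1-x)$ converts the $q$-statistic of $L_{n+1,k}$ (which tracks $|\lambda|$) into the filling generating function.

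Finally I would pass from $\mcB$-boundaries to general $\mcA'$-boundaries: as observed around Figure~\ref{arbdec}, inserting any number of extra right-to-left maxima between the two rightmost right-to-left minima does not alter the unshaded region, and each inserted point contributes a factor of $x$. Summing over $0, 1, 2, \ldots$ such insertions multiplies by $\frac{1}{1-x}$; combined with a convenient index shift this produces the outer $\frac{x}{1-x}$ of the stated formula, and adding $1$ for the empty permutation completes the identity.

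The main obstacle will be the independence argument in the middle step: I must verify that arbitrary decreasing sequences placed inside distinct unshaded cells never combine, either among themselves or with boundary points, to form a new occurrence of $p$ or $r$. Because $p$ carries a vincular (adjacent-positions) shading and $r$ carries a covincular (consecutive-values) shading, each interaction across a cell boundary, or between a filled cell and a right-to-left extremum, needs a careful case-analysis to rule out forbidden configurations. This is the step where the precise geometry of the boundary — and hence the identification of unshaded cells with the cells of $\lambda$ — is critical.
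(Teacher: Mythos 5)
Your proposal follows essentially the same route as the paper's proof: both decompose $\pi$ by its right-to-left maxima and right-to-left minima boundary, reduce to the $\mcB_n$ case, fill the unshaded cells (identified via Proposition~\ref{bijectionpart} and Remark~\ref{points} with the cells of the distinct-parts partition $\lambda$) with arbitrary decreasing sequences through the substitution $q = \frac{1}{1-x}$ in $L_{n+1,k}$, and recover general boundaries by inserting extra right-to-left maxima to produce the outer $\frac{x}{1-x}$. The only difference is that you explicitly flag the cell-independence verification as a remaining obstacle, a step the paper's (very terse) proof leaves implicit.
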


\begin{proof}
  Let $\pi\in\mcA$.  Consider the boundary given by right-to-left maxima
  and right-to-left minima.  As before we assume that $\pi(n-1) = \pi(n) - 1$
  and thus the boundary is in $\mathcal{B}_n$. To avoid $r$ the points above
  $\pi(n)$ must form a decreasing sequence. There must also be no points between
  a right-to-left minimum and right-to-left minimum in order to avoid $p$.  In
  the remaining unshaded regions, columns are decreasing (to avoid $p$) and rows
  are decreasing (to avoid $r$). Thus, an unshaded square can contain a
  decreasing sequence of any length. The bijection in
  Proposition~\ref{bijectionpart} gives a bijection that defines the available
  squares, and, considering Remark~\ref{points}, it follows that the ordinary
  generating function for $\mcA$ is as claimed.
\end{proof}

\section{Recurrence relations (\texorpdfstring{\seqnum{A249562}}{A249562} \& \texorpdfstring{\seqnum{A249563}}{A249563})}\label{recurrences}

We enumerate the two remaining pairs $(p,r) \in \PQ$ with recurrence relations.

\subsection{A recurrence for \texorpdfstring{\seqnum{A249563}}{A249563}} \label{firstrecurrence}

The set we are enumerating is
\[
\Av_n(p,r) = \Av_n \left(
  \mpattern{scale=\patttextscale}{3}{1/1,2/3,3/2}{2/0,2/1,2/2,2/3},
  \mpattern{scale=\patttextscale}{3}{1/1,2/2,3/3}{0/1,1/1,2/1,3/1}
\right).
\]
Let $\pi \in \Av_n(p,r)$. Write $\pi=m_1T_1 m_2T_2 \cdots m_kT_k$
where $m_1$, $m_2$, \ldots, $m_k$ are the left-to-right minima of $\pi$, and
$T_1, T_2, \ldots, T_k$ are the remaining points in between the
minima.
To avoid $p$ each $T_i$ must be increasing. We call $m_iT_i$
the $i$th \emph{block} of $\pi$.

Assume that $\pi$ has an occurrence of the pattern
\!\raisebox{0.2em}{\mpattern{scale=\patttextscale}{2}{1/1,2/2}{0/1,1/1,2/1}}\!\!.
Because $\pi$ avoids $r$ there cannot be any points above and to the
right of this occurrence.  This motivates the following definition.

\begin{definition}
  For a permutation $\pi \in \Av_n(r)$, if there exists $i$ and $j$ such
  that $j > i$ and $\pi(i) = \pi(j) - 1$ then we call $\pi(j)$ a
  \emph{ceiling point}.
\end{definition}

Going back to analyzing the structure of $\pi \in \Av_n(p,r)$, notice
that if we remove the maximum, $n$, from $\pi$ then the resulting
permutation will be in $\Av_{n-1}(p,r)$. This gives us ground for a
recursion. Consider inserting $n$ into a permutation in
$\Av_{n-1}(p,r)$. Where we can place $n$ depends on several factors. Let
$a_{n,k,i,\ell}$ be the number of avoiders where $n$ is the length of the
permutation; $k$ is the number of blocks; $i$ is the block that the
maximum is in and $\ell$ is the block containing the leftmost ceiling
point; if there is no ceiling point then we let $\ell = 0$.

It is clear that we can have at most $n$ blocks and that $n$ cannot
occur after the leftmost ceiling point. Therefore if $n < k$ or $i > \ell$
(while $\ell > 0$) then $a_{n,k,i,\ell} = 0$. There is a unique length $n$
permutation with $n$ blocks, namely the decreasing one. The maximum is in
the first block (except when $n=0$, in which case there is no maximum so we let
$i = 0$), hence we have
\[
a_{n,n,1,0} = 1 = a_{0,0,0,0}.
\]
We have three cases to consider. The new maximum, $n$, is inserted to
become a ceiling point (this is when $i = \ell$); $n$ is inserted to
create a new block (when $i = 1$) or $n$ is inserted into an existing
block but is not a ceiling point.

We first consider inserting $n$ into a length $n-1$ permutation so as to
ensure $n$ is a ceiling point. It must be placed after the current
maximum but before the leftmost ceiling point. If the smaller
permutation has no ceiling point then we can freely insert $n$. Hence
\[
a_{n,k,\ell,\ell} = a_{n-1,k,1,0} + \sum_{j=1}^i \sum_{m=i+1}^k a_{n-1,k,j,m}.
\]

Now we consider inserting $n$ so it is not a ceiling point. We may
either create a new block (when $i = 1$) or place it into an already
existing block. Consider inserting it into an existing block, then it
cannot be placed after the current maximum or else it will become a
ceiling point. The leftmost ceiling point will carry over to the larger
permutation. Therefore, if $i < \ell$,
\[
a_{n,k,i,\ell} = \sum_{j=i+1}^k a_{n-1,k,i,\ell}.
\]

To create a new block we can add $n$ to any length $n-1$ avoider but there
will of course be a shift of indices. If $i = 1$ we get
\[
a_{n,k,i,\ell} = \sum_{j=i+1}^k a_{n-1,k,j,\ell} + \sum_{j=0}^{k-1} a_{n-1,k-1,j,\ell-1}.
\]
This, with the initial conditions, gives a recursion for $a_{n,k,i,\ell}$.

\begin{proposition} \label{prop:firstrecurrence}
  Let $p = \mpattern{scale=\patttextscale}{3}{1/1,2/3,3/2}{2/0,2/1,2/2,2/3}$ and
  $r = \mpattern{scale=\patttextscale}{3}{1/1,2/2,3/3}{0/1,1/1,2/1,3/1}$.
  The number of permutations in $\Av_n(p,r)$ is given by
  \[
  \left\{\sum_{k=0}^n \sum_{i=0}^k \sum_{\ell = 0}^k a_{n,k,i,\ell}\right\}_{\!n\geq 0}
  \!=\; \{1, 1, 2, 4, 9, 22, 57, 156, 447, 1335, 4140, \ldots \}.
  \]
\end{proposition}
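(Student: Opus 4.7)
The plan is to cement the informal insertion argument preceding the proposition into a proof by induction on $n$. By definition, $|\Av_n(p,r)| = \sum_{k,i,\ell} a_{n,k,i,\ell}$, so the task reduces to verifying that these refined quantities satisfy the three recurrences and the initial conditions $a_{0,0,0,0} = 1$ and $a_{n,n,1,0} = 1$ stated in the exposition.

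First I would formalize the two structural observations that underpin the bookkeeping. For the block decomposition $\pi = m_1 T_1 m_2 T_2 \cdots m_k T_k$, each $T_i$ must be increasing; otherwise a descent inside $T_i$ together with the preceding left-to-right minimum $m_i$ (which by definition of $T_i$ is horizontally adjacent to its first element) would realize an occurrence of $p$. For the ceiling-point property, I would verify that if $\pi(a) = \pi(b)-1$ with $a < b$ and some point $\pi(c) > \pi(b)$ satisfies $c > b$, then $\pi(a)\pi(b)\pi(c)$ is order-isomorphic to $123$ with $\pi(b)$ one row above $\pi(a)$, so the row-shading condition of $r$ is automatically satisfied, yielding a forbidden occurrence.

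I would then induct on $n$. Removing the letter $n$ from any $\pi \in \Av_n(p,r)$ produces a member of $\Av_{n-1}(p,r)$, since deletion cannot create new pattern occurrences. For the inverse direction, I fix target parameters $(n,k,i,\ell)$ and enumerate the insertions of $n$ into a smaller avoider that yield exactly that tuple, partitioning according to the three cases: $n$ becomes the leftmost ceiling point ($i = \ell$), $n$ goes into an existing block strictly before the ceiling-point block ($i < \ell$, ceiling point unchanged), or $n$ is prepended to create a new first block ($i = 1$ with $\pi(1) = n$). In each case I would check that the legal insertion positions, read in reverse, correspond bijectively to a set of smaller avoiders with the index ranges appearing in the stated recurrences.

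The main obstacle will be keeping the index shifts consistent in the new-block case. When $n$ is prepended to a $(k-1)$-block avoider, its old blocks $1, \ldots, k-1$ become the new blocks $2, \ldots, k$, so the old maximum's block index $j$ ranges over $\{0,\ldots,k-1\}$ while its ceiling-point index $\ell'$ satisfies $\ell = \ell'+1$ (with $\ell = 0$ persisting only when there is still no ceiling point). Inserting $n$ inside the first block of a $k$-block avoider instead, without creating a ceiling point, accounts for the complementary contribution $\sum_{j=2}^{k} a_{n-1,k,j,\ell}$, and one must verify that such an insertion is only available when the old maximum was in block $j \geq 2$. After sorting out these shifts and checking the boundary values $a_{0,0,0,0}=1$ and $a_{n,n,1,0}=1$ against the fully decreasing avoider, the recurrence is established and the total $\sum_{k,i,\ell} a_{n,k,i,\ell}$ reproduces the sequence $1,1,2,4,9,22,57,156,447,1335,4140,\ldots$, which I would confirm by direct computation of the first several terms.
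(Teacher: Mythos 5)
Your proposal follows essentially the same route as the paper: the paper's own argument is precisely the block decomposition $\pi = m_1T_1\cdots m_kT_k$ with increasing $T_i$, the ceiling-point observation forced by $r$, and an induction that inserts the new maximum $n$ into a smaller avoider, split into the same three cases (ceiling point, existing block, new first block) with the same index-shift bookkeeping. Your plan correctly identifies all the ingredients the paper uses, including the subtlety that the $i=1$ case must combine the new-block contribution with insertions into the already-existing first block.
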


\noindent
This sequence was added to the OEIS by the authors
\cite[\seqnum{A249563}]{motzkin}.

\subsection{A recurrence for \texorpdfstring{\seqnum{A249562}}{A249562}}\label{secondrecurrence}

Here we enumerate the set
\[
\Av_n(p,r) = \Av_n \left(
  \mpattern{scale=\patttextscale}{3}{1/1,2/2,3/3}{2/0,2/1,2/2,2/3},
  \mpattern{scale=\patttextscale}{3}{1/1,2/2,3/3}{0/1,1/1,2/1,3/1}
\right).
\]
Let $\pi \in \Av_n(p,r)$. Write $\pi=m_1T_1 m_2T_2 \cdots m_kT_k$ where
$m_1$, $m_2$, \ldots, $m_k$ are the left-to-right minima of $\pi$, and $T_1,
T_2, \ldots, T_k$ are the remaining points in between the minima. To avoid
$p$ each $T_i$ must be decreasing. We call $m_iT_i$ the $i$th \emph{block}
of $\pi$.  Notice that removing $n$ from $\pi$ will result in a permutation in
$\Av_{n-1}(p,r)$. Therefore we will build these permutations recursively by
adding in a new maximum.

We set up as follows: let $n$ be the length of the permutation; let $k$ be the
number of blocks; let $i$ be the position of the maximum; and let $\ell$ be the
position of the leftmost ceiling point (if there are no ceiling points we set
$\ell = 0$). Let $\hat{a}_{n,k,i,\ell}$ be the number of avoiders where the
maximum is a ceiling point; let $\bar{a}_{n,k,i,\ell}$ be the number of avoiders
where the maximum is a left-to-right minimum; and let $\check{a}_{n,k,i,\ell}$
be the number of avoiders where the maximum is neither a ceiling point nor a
left-to-right minimum. Then we are interested in
\[
  a_{n,k,i,\ell} = \hat{a}_{n,k,i,\ell} + \bar{a}_{n,k,i,\ell} + \check{a}_{n,k,i,\ell}.
\]

First consider adding the maximum as a ceiling point. If we want to add $n$ to
the first block then we must have $m_1 = n-1$ and the leftmost ceiling point can
be anywhere. Therefore,
\[
  \hat{a}_{n,k,1,\ell} = \sum_{m = 0}^k \bar{a}_{n-1,k,1,m}.
\]
Otherwise we want to add $n$ to any of the other blocks. We can do this to a
permutation starting with a maximum as long as it is before the leftmost ceiling
point. If the previous maximum is not a ceiling point then we must add it after
the maximum but before the leftmost ceiling point. We cannot create a new
maximum ceiling point if the previous one is already a ceiling point. Hence, if
$i > 1$,
\[
  \hat{a}_{n,k,i,\ell} = \sum_{m = \ell}^k \bar{a}_{n-1,k,1,m} +
  \sum_{j=1}^{i-1} \sum_{m = i}^k \check{a}_{n-1,k,j,m}.
\]
We can add a maximum to the far left of any length $n-1$ avoider to create a
length $n$ avoider, so we get
\[
  \bar{a}_{n,k,i,\ell} = \sum_{j=1}^{k-1} a_{n-1,k-1,j,\ell -1}.
\]
We can add a new maximum to an existing block so that it is not a ceiling point
as long as it comes before the current maximum, so
\[
  \check{a}_{n,k,i,\ell} = \sum_{j=i}^{k} \hat{a}_{n-1,k,j,\ell} + \check{a}_{n-1,k,j,\ell}.
\]
This together with $\bar{a}_{n,n,1,0} = 1$, $\hat{a}_{n,n-1,i,\ell} = 1$, and
the conditions that $n > k > i$ and $i < \ell$ is enough to enumerate these
permutations recursively.

\begin{proposition} \label{prop:secondrecurrence}
  Let $p = \mpattern{scale=\patttextscale}{3}{1/1,2/2,3/3}{2/0,2/1,2/2,2/3}$ and
  $r = \mpattern{scale=\patttextscale}{3}{1/1,2/2,3/3}{0/1,1/1,2/1,3/1}$
  The number of permutations in $\Av_n(p,r)$ is given by
  \[
    \left\{\sum_{k=0}^n \sum_{j = 0}^k \sum_{\ell = 0}^k
    a_{n,k,i,\ell}\right\}_{\!n\geq 0}
    \!=\; \{1, 1, 2, 5, 14, 43, 143, 509, 1921, 7631, 31725, \ldots\}.
  \]
\end{proposition}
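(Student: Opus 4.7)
The plan is to derive the stated recurrence by inserting a new maximum into a smaller avoider, mirroring the strategy of Section~\ref{firstrecurrence}. I would begin by recording the structural consequences of $p$ and $r$: avoidance of $p$ (the pattern $123$ with the middle column shaded) forces each block $T_i$ in the decomposition $\pi = m_1 T_1 \cdots m_k T_k$ by left-to-right minima to be decreasing, since any adjacent ascent inside $T_i$ together with the preceding minimum $m_i$ would furnish an occurrence of $p$; and avoidance of $r$ (the pattern $123$ with the middle row shaded) means that for every ceiling point $\pi(b)$ (i.e.\ $\pi(a) = \pi(b) - 1$ for some $a < b$) no larger value may appear to the right of position $b$, so the leftmost ceiling point is the natural marker $\ell$.

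Next, I would observe that removing the letter $n$ preserves avoidance, so every length-$n$ avoider arises uniquely by inserting $n$ into some $\pi' \in \Av_{n-1}(p,r)$. The count then splits according to the role of $n$ in $\pi$: it becomes a ceiling point (counted by $\hat{a}$), a left-to-right minimum (counted by $\bar{a}$), or neither (counted by $\check{a}$). For each case I would pin down exactly which positions are legal given the avoidance constraints: $n$ may never sit to the right of the leftmost ceiling point of $\pi'$; inserting $n$ immediately after a copy of $n-1$ is precisely what turns $n$ into a new ceiling point; and inserting $n$ into block $i$ is otherwise unconstrained by $p$ because $n$ is the global maximum and so cannot play the $1$ of any $123$ occurrence. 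Writing out the admissible parameter ranges in each subcase then yields the displayed sums.

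The principal obstacle is the bookkeeping for $\hat{a}_{n,k,i,\ell}$: to make $n$ a ceiling point, the previous permutation must have $n-1$ occupying either the first minimum slot (contributing the single sum $\sum_{m} \bar{a}_{n-1,k,1,m}$, with the range of $m$ determined by whether $n$ lands in the first block or later) or a non-extremal position in an earlier block (contributing the double sum over $\check{a}_{n-1,k,j,m}$), with the index constraints $j < i$ and $m \ge i$ forced by the requirement that the newly created ceiling still lie weakly to the left of every preexisting one. Once this case is nailed down, the recurrence for $\bar{a}$ (placing $n$ at the far left, which shifts the leftmost-ceiling index from $\ell-1$ to $\ell$) and for $\check{a}$ (placing $n$ inside an existing block, strictly before the previous maximum and before the leftmost ceiling) are comparatively routine.

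With these recurrences in hand, together with the base data $\bar{a}_{n,n,1,0} = 1$ and $\hat{a}_{n,n-1,i,\ell} = 1$ and the boundary conditions $n > k \ge i$ and $i \le \ell$ (where $\ell = 0$ encodes the absence of a ceiling), a straightforward induction on $n$ shows that $a_{n,k,i,\ell}$ enumerates avoiders with the prescribed statistics. Summing over $k$, $i$, $\ell$ produces the claimed sequence, and I would conclude by verifying the first handful of terms $\{1,1,2,5,14,43,143,\ldots\}$ to catch any off-by-one slip in the summation bounds.
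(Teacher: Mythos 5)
Your proposal reproduces the paper's own argument essentially verbatim: the same block decomposition by left-to-right minima (with each $T_i$ decreasing to avoid $p$), the same notion of ceiling point forced by $r$, the same insertion of a new maximum split into the three cases counted by $\hat{a}$, $\bar{a}$, $\check{a}$, and the same base cases and index constraints. The approach is correct and matches the paper; the only nitpick is that ``inserting $n$ immediately after a copy of $n-1$'' should read ``anywhere to the right of $n-1$ (and left of the leftmost ceiling point)'', which you in fact state correctly later.
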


\noindent
This sequence was added to the OEIS by the authors~\cite[\seqnum{A249562}]{motzkin}.

\section{Avoiding \texorpdfstring{$123$}{123}}\label{avoiding123}

It is well known that $|\Av_n(123)|=C_n=\binom{2n}{n}/(n+1)$, the $n$th
Catalan number. Inspired by Sections~\ref{barsection} and
\ref{motzkinsection} we shall derive this fact in a alternative way.

\begin{proposition*}[Hammersley~\cite{hammersley}, Rogers~\cite{rogers}]
  $|\Av_n(123)| = C_n$.
\end{proposition*}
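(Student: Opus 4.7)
My plan is to adapt the left-to-right-minima argument of Proposition~\ref{motzkinrecur} to pure $123$-avoidance. Given $\pi \in \Av_n(123)$, write $\pi = m_1 T_1 m_2 T_2 \cdots m_k T_k$, where $m_1 > m_2 > \cdots > m_k = 1$ are the left-to-right minima and each $T_i$ is the (possibly empty) block between consecutive minima. From $123$-avoidance alone, each $T_i$ must be decreasing (otherwise $m_i$ together with an ascent inside $T_i$ would be an occurrence of $123$), and for $i < j$ every element of $T_j$ must be smaller than every element of $T_i$ (otherwise $m_i, w, v$ with $w \in T_j$, $v \in T_i$, and $w < v$ would be an occurrence of $123$). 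Consequently $T_1$ is forced to be a contiguous top interval $\{t, t+1, \ldots, n\}$ for some $t$ with $m_1 < t \le n+1$, because every other value of $\pi$ lies strictly below $\min T_1$.

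This decomposition presents each non-empty $\pi \in \Av_n(123)$ as a triple $(m_1, T_1, \sigma)$, where $\sigma = m_2 T_2 \cdots m_k T_k$ is a $123$-avoider on $\{1, \ldots, t-1\} \setminus \{m_1\}$ whose first entry, when non-empty, is $m_2 < m_1$. Letting $A(x) = \sum_{n \ge 0} |\Av_n(123)| x^n$ and summing the contributions of all such triples, I aim to derive the Catalan functional equation
\[
  A(x) = 1 + x A(x)^2,
\]
whose unique formal power-series solution is $A(x) = \sum_{n \ge 0} C_n x^n$.

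The main obstacle is the compatibility constraint ``$\sigma(1) < m_1$'', which blocks a naive product decomposition into two independent $123$-avoiders. I plan to handle it by summing over the value of $t$ first: once $t$ is fixed, the joint enumeration of $(m_1, \sigma)$ should collapse to the Catalan convolution $\sum_{i+j = t-2} C_i C_j = C_{t-1}$ as soon as initial values are bookkept carefully. A fallback, more faithful to Proposition~\ref{motzkinrecur}, is a Motzkin-style ``leftmost non-empty diagonal cell'' split in which the diagonal cells are now allowed to hold arbitrary decreasing sequences rather than single points; this split should still yield the functional equation $A = 1 + x A^2$ and hence $|\Av_n(123)| = C_n$.
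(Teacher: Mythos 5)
Your structural observations about $\Av_n(123)$ are correct, but neither of your two routes closes to a proof, and the fallback route, taken literally, produces the wrong numbers. Transplanting the ``leftmost non-empty diagonal cell'' split of Proposition~\ref{motzkinrecur} fails because the feature that makes that split a bijection in the Motzkin case --- every row of the cell grid holds at most one point, so a point in a diagonal cell forces the remainder of its row to be empty --- is lost for pure $123$-avoidance. Here a non-empty cell only forces the cells \emph{strictly} above and strictly to the right to be empty; the rest of its own row may still be occupied, and those points belong to neither the ``part above'' nor the ``part below.'' Concretely, $2413$ and $3412$ both have leftmost non-empty diagonal cell consisting of the single point $4$, empty part above, and remaining points order isomorphic to $12$, so the split is not injective. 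The functional equation the naive split yields is $A = 1 + xA + \frac{x^{2}}{1-x}A^{2}$, whose coefficient of $x^{4}$ is $13 \neq C_{4} = 14$. The paper's proof repairs exactly this defect: it distinguishes the \emph{bottom-most} non-empty diagonal cell, adds a single extra point there, and glues the second avoider in so that its top row is merged into that cell's row; this gives $A = 1 + x(A-1)A + xA = 1 + xA^{2}$.

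Your first route has a different gap. The decomposition $\pi \mapsto (m_{1}, T_{1}, \sigma)$ is sound, but the number of admissible pairs $(m_{1},\sigma)$ on $s$ letters --- equivalently, $123$-avoiders of length $s$ beginning with a descent, together with the case $s=1$ --- equals $|\Av_{s}(123)| - |\Av_{s-1}(123)|$ for $s\geq 2$, so summing over $t$ merely telescopes to $a_{n} = 1 + \sum_{s=2}^{n}(a_{s}-a_{s-1}) = a_{n}$ and determines nothing. Moreover the claimed collapse to $\sum_{i+j=t-2}C_{i}C_{j}=C_{t-1}$ is numerically false already at $s=t-1=3$: there are $3$ admissible pairs ($213$, $312$, $321$), not $C_{2}=2$ or $C_{3}=5$. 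To make this route work you would need the refinement of $123$-avoiders by first letter (ballot numbers), which is a genuinely larger input than the statement you are proving. (A small additional slip: in your justification that elements of $T_{j}$ lie below those of $T_{i}$ for $i<j$, the occurrence of $123$ should be $m_{i},v,w$ with $v\in T_{i}$, $w\in T_{j}$ and $v<w$; you have the roles and the inequality reversed.)
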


\begin{proof}
  Given a permutation avoiding $123$ we can use its left-to-right minima
  to partition the remaining points into cells.  Each cell must be
  decreasing and the same is true for each row and each column, as noted
  by Claesson and Kitaev~\cite{bijection}. Therefore the permutation is uniquely
  determined by the number of points in each cell.  If a cell is
  non-empty then all the cells strictly above and strictly to the right
  of it will be empty. See e.g.,~Figure~\ref{fig:nonempty} where
  we have five left-to-right minima and are assuming that
  $A \neq \epsilon$.
  \begin{figure}[ht]
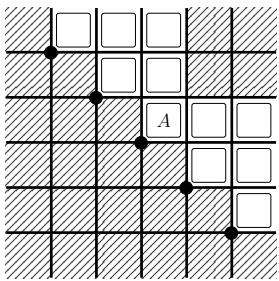

    \[
    \metapatt{0.6}{5/5}{}{1/v,2/v,3/v,4/v,5/v,1/h,2/h,3/h,4/h,5/h}
    {1/5,2/4,3/3,4/2,5/1}
    [0/0,0/1,0/2,0/3,0/4,0/5,1/0,1/1,1/2,1/3,1/4,2/0,2/1,2/2,2/3,3/0,
    3/1,3/2,4/0,4/1,5/0,4/4,4/5,5/4,5/5][1/5/2/6/,2/5/3/6/,3/5/4/6/,
    2/4/3/5/,3/4/4/5/,3/3/4/4/$A$,4/3/5/4/,5/3/6/4/,4/2/5/3/,5/2/6/3/,5/1/6/2/][]
    \]
    \caption{An avoider of $123$ with five left-to-right minima where
      $A \neq \epsilon$}
    \label{fig:nonempty}
  \end{figure}
  This property allows us to construct a larger avoider from two smaller
  ones. See Figure~\ref{fig:av123-sum-1}
  \begin{figure}[ht]
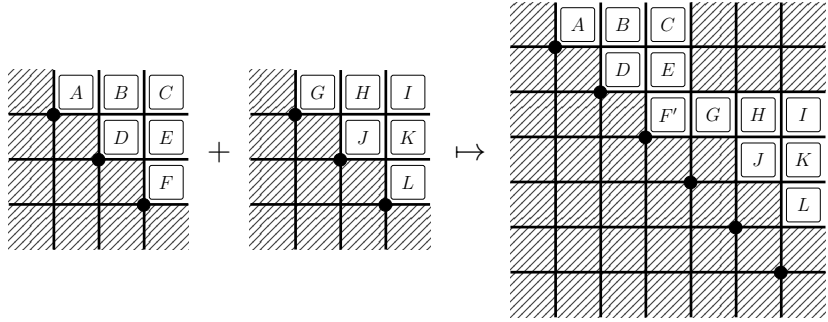

    \[
    \metapatt{0.6}{3/3}{}{1/v,2/v,3/v,1/h,2/h,3/h}{1/3,2/2,3/1}
    [0/0,0/1,0/2,0/3,1/0,1/1,1/2,2/0,2/1,3/0]
    [1/3/2/4/$A$,2/3/3/4/$B$,3/3/4/4/$C$,2/2/3/3/$D$,3/2/4/3/$E$,3/1/4/2/$F$]
    \,+\, \metapatt{0.6}{3/3}{}{1/v,2/v,3/v,1/h,2/h,3/h}{1/3,2/2,3/1}
    [0/0,0/1,0/2,0/3,1/0,1/1,1/2,2/0,2/1,3/0]
    [1/3/2/4/$G$,2/3/3/4/$H$,3/3/4/4/$I$,2/2/3/3/$J$,3/2/4/3/$K$,3/1/4/2/$L$]
    \;\mapsto\;
    \metapatt{0.6}{6/6}{}{1/v,2/v,3/v,4/v,5/v,6/v,1/h,2/h,3/h,4/h,5/h,6/h}
    {1/6,2/5,3/4,4/3,5/2,6/1}
    [0/0,0/1,0/2,0/3,0/4,0/5,0/6,1/0,1/1,1/2,1/3,1/4,1/5,2/0,2/1,2/2,2/3,2/4,3/0,
    3/1,3/2,3/3,4/0,4/1,4/2,5/0,5/1,6/0,6/1,5/2,4/3,5/5,5/6,6/5,6/6,4/5,4/6]
    [1/6/2/7/$A$,2/6/3/7/$B$,3/6/4/7/$C$,2/5/3/6/$D$,3/5/4/6/$E$,3/4/4/5/$F'$,
    4/4/5/5/$G$,5/4/6/5/$H$,6/4/7/5/$I$,5/3/6/4/$J$,6/3/7/4/$K$,6/2/7/3/$L$][]
    \]
    \caption{The sum of two 123-avoiding permutations}
    \label{fig:av123-sum-1}
  \end{figure}
  where $F'$ has one more point than $F$. If we are adding the empty
  permutation, on the left, we instead add a left-to-right minimum. See
  Figure~\ref{fig:av123-sum-2}.
  \begin{figure}[hbtp]
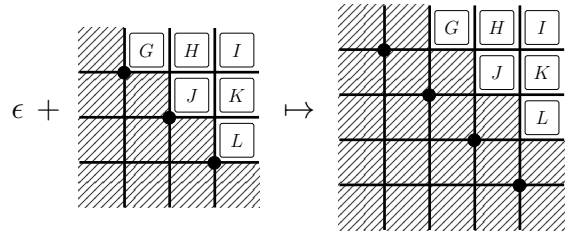

    \[
    \epsilon \,+\, \metapatt{0.6}{3/3}{}{1/v,2/v,3/v,1/h,2/h,3/h}{1/3,2/2,3/1}
    [0/0,0/1,0/2,0/3,1/0,1/1,1/2,2/0,2/1,3/0]
    [1/3/2/4/$G$,2/3/3/4/$H$,3/3/4/4/$I$,2/2/3/3/$J$,3/2/4/3/$K$,3/1/4/2/$L$]
    \;\mapsto\;
    \metapatt{0.6}{4/4}{}{1/v,2/v,3/v,4/v,1/h,2/h,3/h,4/h}{1/4,2/3,3/2,4/1}
    [0/0,0/1,0/2,0/3,0/4,1/0,1/1,1/2,1/3,1/4,2/0,2/1,2/2,2/3,3/0,3/1,3/2,4/0,4/1]
    [2/4/3/5/$G$,3/4/4/5/$H$,4/4/5/5/$I$,3/3/4/4/$J$,4/3/5/4/$K$,4/2/5/3/$L$]
    \]
    \caption{The sum of the empty permutation and a 123-avoiding permutation}
    \label{fig:av123-sum-2}
  \end{figure}
  This construction is reversible. Therefore, if we let $A$ be the generating
  function then it is clear that it will satisfy $A = 1 + x(A-1)A + xA = 1 +
  xA^2$, which is the defining functional equation for the ordinary generating
  function of the Catalan numbers.
\end{proof}

We would like to thank the referee for his detailed comments and suggestions.

\section*{Appendix}

In the tables below the column titled ``Method'' indicates the argument to
confirm the enumeration. In some cases these links are to a proposition or lemma
with the patterns, and in others to a similar case where the same or a similar
argument is used.

\renewcommand{\arraystretch}{1.4}
\begin{longtable}{|c|c|c|c|}
  \hline
  Method & $p$ & $r$ & $|\Av_n(p,r)|$ \\
  \hline
  Prop.~\ref{shadingonce}
  & $(123,\emptyset,\emptyset)$ & $(123,\emptyset,\{1\})$ & $C_n$\\
  & $(132,\emptyset,\emptyset)$ & $(132,\emptyset,\{1\})$ & \\
  & $(132,\emptyset,\emptyset)$ & $(132,\emptyset,\{2\})$ & \\
  \hline
  Prop.~\ref{shadingtwice}
  & $(132,\{1\},\emptyset)$ & $(132,\emptyset,\{1\})$ & $C_n$\\
  & $(132,\{1\},\emptyset)$ & $(132,\emptyset,\{2\})$ & \\
  \hline
  Prop.~\ref{shadingonce}
  & $(123,\emptyset,\emptyset)$ & $(231,\emptyset,\{1\})$ & $\binom{n}{2} + 1$\\
  \hline
  Prop.~\ref{shadeonebox}
  & $(132,\emptyset,\emptyset)$ & $(321,\emptyset,\{1\})$ & $\binom{n}{2} + 1$\\
  \hline
  Lemma~\ref{Shading1} and Prop.~\ref{shadeonebox}
  & $(123,\{2\},\emptyset)$ & $(231,\emptyset,\{1\})$ & $\binom{n}{2} + 1$ \\
  \hline
  Prop.~\ref{shadingonce}
  & $(123,\emptyset,\emptyset)$ & $(132,\emptyset,\{1\})$ & $2^{n-1}$\\
  & $(132,\emptyset,\emptyset)$ & $(213,\emptyset,\{2\})$ & \\
  & $(132,\emptyset,\emptyset)$ & $(231,\emptyset,\{1\})$ & \\
  & $(132,\emptyset,\emptyset)$ & $(312,\emptyset,\{2\})$ & \\
  \hline
  Prop.~\ref{shadingtwice}
  & $(132,\{1\},\emptyset)$ & $(213,\emptyset,\{2\})$ & $2^{n-1}$\\
  & $(132,\{1\},\emptyset)$ & $(231,\emptyset,\{1\})$ & \\
  \hline
  Prop.~\ref{shadeonebox}
  & $(132,\emptyset,\emptyset)$ & $(123,\emptyset,\{1\})$ & $2^{n-1}$\\
  & $(132,\emptyset,\emptyset)$ & $(213,\emptyset,\{1\})$ & \\
  & $(132,\emptyset,\emptyset)$ & $(231,\emptyset,\{2\})$ & \\
  & $(132,\emptyset,\emptyset)$ & $(312,\emptyset,\{1\})$ & \\
  \hline
  Lemma~\ref{Shading1} and Prop.~\ref{shadeonebox}
  & $(123,\{1\},\emptyset)$ & $(132,\emptyset,\{1\})$ & $2^{n-1}$\\
  & $(132,\{1\},\emptyset)$ & $(213,\emptyset,\{1\})$ & \\
  & $(132,\{1\},\emptyset)$ & $(231,\emptyset,\{2\})$ & \\
  & $(132,\{1\},\emptyset)$ & $(312,\emptyset,\{1\})$ & \\
  \hline
  Prop.~\ref{2nfirst}
  & $(123,\{2\},\emptyset)$ & $(312,\emptyset,\{2\})$     & $2^{n-1}$\\
  & $(132,\emptyset,\emptyset)$ & $(321,\emptyset,\{2\})$ & \\
  & $(132,\{2\},\emptyset)$ & $(213,\emptyset,\{1\})$     & \\
  \hline
  Prop.~\ref{2nsecond}
  & $(123,\emptyset,\emptyset)$ & $(231,\emptyset,\{2\})$ & $2^{n-1}$\\
  & $(123,\{1\},\emptyset)$ & $(312,\emptyset,\{1\})$     & \\
  \hline
  Prop.~\ref{choose}
  & $(123,\{2\},\emptyset)$ & $(132,\emptyset,\{2\})$ & \seqnum{A121690}\\
  \hline
  \S~\ref{barsection}
  & $(123,\{1\},\emptyset)$ & $(123,\emptyset,\{1\})$ & \seqnum{A098569}\\
  & $(132,\{2\},\emptyset)$ & $(132,\emptyset,\{2\})$ & \\
  \hline
  Prop. \ref{motzkingen}
  & $(132,\emptyset,\emptyset)$ & $(123,\emptyset,\{2\})$ & $M_n$\\
  \hline
  Lemma~\ref{Shading1} and Prop.~\ref{motzkingen}
  & $(123,\{1\},\emptyset)$ & $(213,\emptyset,\{2\})$ & $M_n$\\
  \hline
  Prop.~\ref{motzkinrecur}
  & $(123,\emptyset,\emptyset)$ & $(132,\emptyset,\{2\})$ & $M_n$\\
  \hline
  Prop.~\ref{lattice}
  & $(132,\{2\},\emptyset)$ & $(231,\emptyset,\{2\})$ & \seqnum{A249563}\\
  \hline
  Prop.~\ref{partition}
  & $(123,\{1\},\emptyset)$ & $(231,\emptyset,\{2\})$ & \seqnum{A249561}\\
  \hline
  \S~\ref{prop:firstrecurrence}
  & $(123,\{1\},\emptyset)$ & $(132,\emptyset,\{2\})$ & \seqnum{A249560}\\
  \hline
  \S~\ref{prop:secondrecurrence}
  & $(123,\{1\},\emptyset)$ & $(123,\emptyset,\{2\})$ & \seqnum{A249562}\\
  \hline
  & $(123,\emptyset,\emptyset)$ & $(321,\emptyset,\{1\})$ & finite\\
  & $(123,\{1\},\emptyset)$ & $(321,\emptyset,\{1\})$ & \\
  & $(123,\{1\},\emptyset)$ & $(321,\emptyset,\{2\})$ & \\
  \hline
  \caption{Enumeration of $\Av_n(p,r)$}\label{big-table}
\end{longtable}

\bibliographystyle{amsalpha}
\bibliography{covincular}

\end{document}